\documentclass[12pt, reqno]{amsart}
\usepackage[normalem]{ulem}

\usepackage{amsmath, amsthm, amssymb}
\usepackage{enumerate}
\usepackage[margin=1.0in]{geometry}
\usepackage{xcolor}
\definecolor{cite}{rgb}{0.30,0.60,1.00}
\definecolor{url}{rgb}{0.00,0.00,0.80}
\definecolor{link}{rgb}{0.40,0.10,0.20}
\usepackage[pdfusetitle,colorlinks,linkcolor=link,urlcolor=url,citecolor=cite,pagebackref,breaklinks]{hyperref}
\usepackage{graphicx}
\usepackage{cleveref}
\usepackage{mathdots}
\usepackage{tikz-cd}
\usepackage{comment}
\usepackage[all]{xy}
\usepackage{mathtools}
\usepackage{float}

\newtheorem{theorem}{Theorem}[section]

\newtheorem{proposition}[theorem]{Proposition}
\newtheorem{lemma}[theorem]{Lemma}
\newtheorem{conjecture}[theorem]{Conjecture}
\newtheorem{corollary}[theorem]{Corollary}

\theoremstyle{definition}

\theoremstyle{definition}
\newtheorem{remark}[theorem]{Remark}
\theoremstyle{definition}

\newcommand{\F}{\mathbb{F}}

\newcommand{\Z}{\mathbb{Z}}
\newcommand{\Gr}{\mathrm{Gr}}

\newcommand{\PGL}{\mathrm{PGL}}
\newcommand{\GL}{\mathrm{GL}}
\newcommand{\Gal}{\mathrm{Gal}}
\newcommand{\Fr}{\mathrm{Fr}}
\newcommand{\Tr}{\mathrm{tr}}
\newcommand{\Aut}{\mathrm{Aut}}

\newcommand{\Spec}{\operatorname{Spec}}
\newcommand{\OPtd}{\Pi_{k,n}^\circ}
\newcommand{\OPtdtwist}{\Pi_{k,n}^{\circ'}}

\newcommand{\qbinom}[2]{\begin{bmatrix} #1 \\ #2 \end{bmatrix}_q}

\hypersetup{pdfauthor={Calvin Yost-Wolff,},
	pdfsubject={Combinatorics, Algebraic Geometry, Representation theory},
	pdfkeywords={Singer Cycle, Positroids, Grassmanians}}

\title[Singer Cycle Problem]{Point Count of the Top-dimensional Open Positroid Variety}

\author{Calvin Yost-Wolff}
\address{Department of Mathematics, University of Michigan, 3084 East Hall, 530 Church Street, Ann Arbor, MI 48109-1043 USA}
\email{calvinyw@umich.edu}

\makeatletter
\let\orig@lbibitem\@lbibitem
\def\@lbibitem[#1]#2{%
  \def\@tempkey{#2}%
  \def\@moorekey{Moore1896}%
  \ifx\@tempkey\@moorekey
    \orig@lbibitem[Moo1896]{#2}%
  \else
    \orig@lbibitem[#1]{#2}%
  \fi
}
\makeatother

\begin{document}

\begin{abstract}
In \cite{GLqtcat}, Galashin and Lam discovered that when $k$ and $n$ are coprime, the proportion of subspaces in $\mathrm{Gr}(k,n)(\mathbb{F}_q)$ that lie in the top-dimensional open positroid variety $\Pi_{k,n}^\circ(\mathbb{F}_q)$ is
$|(\mathbb{F}_q^\times)^n|/|\mathbb{F}_{q^n}^\times|$. In this paper, I recover this point count identity by relating the split torus action on $(\Pi_{k,n}^\circ)_{\mathbb{F}_q}$ and an anisotropic torus action on a $\mathbb{F}_q$ rational form of $\Pi_{k,n}^\circ$. 
The main step in the point count argument and the main technical result in this paper is that cyclic rotation acts trivially on the torus-equivariant cohomology of $\Pi_{k,n}^\circ$ when $k$ and $n$ are coprime.
\end{abstract}
\maketitle

\tableofcontents

\section{Introduction}
Building on Postnikov's study of the totally nonnegative Grassmannian and its cell
decomposition~\cite{Pos06}, Knutson--Lam--Speyer constructed a stratification of $\Gr(k,n)$
by \emph{open positroid varieties} $\Pi_f^\circ$ indexed by bounded affine permutations
$f\in B_{k,n}$~\cite{KLS13}. Among all open positroid strata there is a unique one of maximal (i.e. $\dim \Gr(k,n)$)
dimension.  Let $f_{k,n}$ be the bounded affine permutation defined by
$f_{k,n}(i)=i+k$ on $\mathbb{Z}$.  The corresponding stratum $\OPtd \;:=\; \Pi_{f_{k,n}}^\circ$
is the \emph{top-dimensional open positroid variety}.
In Pl\"ucker coordinates, it admits a concrete description:
if $I_r=\{r+1,r+2,\dots,r+k\}$ denotes the cyclic interval (indices taken modulo $n$), then
\begin{equation*}
\OPtd
=\Bigl\{V\in \Gr(k,n)\ \Big|\ \Delta_{I_r}(V)\neq 0\text{ for all }r\in\mathbb{Z}/n\mathbb{Z}\Bigr\}.
\end{equation*}
i.e. $\OPtd$ is the complement in $\Gr(k,n)$ of the union of the vanishing locus of the $n$
cyclic Pl\"ucker coordinates $\{\Delta_{I_r}=0\}$.

\subsection{Finite-field point counts and \texorpdfstring{$R$}{R}-polynomials}

In \cite{GLqtcat}, Galashin--Lam note that open Richardson varieties $\F_q$-point counts are 
given by evaluations of $R$-polynomials.  The top-dimensional open positroid varieties $\OPtd$ are isomorphic projections of Richardson varieties 
whose point counts Galashin--Lam find the following nice formula for when $k,n$ are coprime: Set
\[
[n]_q:=1+q+\cdots+q^{n-1},
\qquad
\qbinom{n}{k}:=\frac{[n]_q!}{[k]_q!\,[n-k]_q!},
\]
then
\begin{align*}
\#\OPtd(\F_q)
=(q-1)^{\,n-1}\cdot \frac{1}{[n]_q}\qbinom{n}{k},
\qquad\text{when }\gcd(k,n)=1.
\end{align*}
The normalized count $\frac{1}{(q-1)^{n-1}}\#\OPtd(\F_q)$ is the
classical ``rational $q$-Catalan'' polynomial $\frac{1}{[n]_q}\qbinom{n}{k}$ in the sense of
MacMahon~\cite{MacMahon1960}. Notice that the rational $q$-Catalan polynomial is equal to $|\Gr(k,n)(\mathbb{F}_q)|/|(\mathbb{F}_{q^n}^\times/\mathbb{F}_q^\times)|$.

Classically, the Grassmannian point count
\[
\#\Gr(k,n)(\F_q)=\qbinom{n}{k}
\]
is given by the Gaussian binomial coefficient.  Geometrically, this is explained by the
Schubert cell decomposition or by the purity of the
cohomology of $\Gr(k,n)$. Thus when $k$ and $n$ are coprime
\begin{equation}\label{eq:point-count-top}
    \#\OPtd(\F_q)
    =\frac{(q-1)^{\,n-1}}{[n]_q}|\Gr(k,n)(\F_q)| = \frac{|(\F_q^\times)^n|}{|\F_{q^n}^\times|}|\Gr(k,n)(\F_q)|
\end{equation}

Thomas Lam and Victor Reiner posed the following question:

\begin{quote}
\emph{Can we find an explanation of the formula \eqref{eq:point-count-top} in terms of the action of an anisotropic torus on the Grassmannian?}
\end{quote}

In this paper, I develop the perspective that the nice formula \eqref{eq:point-count-top}
is the result of the cyclic rotation automorphism acting trivially on the $T$-equivariant compactly supported \'etale cohomology of $\OPtd$ for $k,n$ coprime.

Let us rephrase \eqref{eq:point-count-top} slightly: Let $T\cong(\mathbb{G}_m)^{n}/\mathbb{G}_m$ be the diagonal torus in $\PGL_n$ acting on $\Gr(k,n)$.
When $\gcd(k,n)=1$ the $T$-action on $\OPtd$ is free;
the quotient
\[
X_{k,n}^\circ \;:=\;\OPtd/T = \Spec \Bbbk[\OPtd]^T
\]
is then a smooth affine variety called a \emph{positive configuration space}\cite{GLqtcat}. Let $T'$ be an anisotropic torus in $\PGL(n)_{\F_q}$, so $T'(\mathbb{F}_q) = \mathbb{F}_{q^n}^\times/\mathbb{F}_q^\times$. Choose $g \in \PGL(n)(\mathbb{F}_{q^n})$ such that $gTg^{-1} = T'$ and the Lang map applied to $g$ (i.e $g^{-1}\Fr_q(g)$ where $\Fr_q$ is the geometric Frobenius) is the image of the cyclic rotation matrix $\rho$ in $\PGL(n)(\mathbb{F}_{q})$. Define the twisted top-dimensional open positroid variety 
\begin{equation*}
\OPtdtwist
\;:=\;\Bigl\{V\in \Gr(k,n)\ \Big|\ \Delta_{I_r}(g^{-1}V)\neq 0\text{ for all }r\in\mathbb{Z}/n\mathbb{Z}\Bigr\}.
\end{equation*}
The condition $g^{-1}\Fr(g) = \rho$, the $\rho$ stability of $\OPtd$, and the fact that $\rho$ normalizes $T$ combine to imply that $X_{k,n}^{\circ'}:= \OPtdtwist/T'$ has an $\mathbb{F}_q$-rational structure.
Although $X_{k,n}^{\circ}$ and $X_{k,n}^{\circ'}$ are not necessarily isomorphic over $\mathbb{F}_{q}$ (see remark \ref{rem:bij_hope}), I will show their compactly supported \'etale cohomologies are isomorphic as $\Gal(\overline{\mathbb{F}_q}/\mathbb{F}_q)$-modules when $k$ and $n$ are coprime, implying that their $\mathbb{F}_q$-point counts are the same. This will follow from the fact that $\rho$ acts trivially on the compactly supported \'etale cohomology of $X_{k,n}^\circ$ when $k$ and $n$ are coprime.

\subsection{Main Results}
Let $\Bbbk$ be any field, let $V = \Bbbk^n$ with standard basis $e_1,\dots,e_{n}$. The rotation automorphism
\[
\rho : V \to V, \qquad \rho(e_i) = e_{i+1} \quad (1 \le i \le n-1), \qquad \rho(e_{n}) = e_1,
\]
is the linear operator given by cyclic rotation of the coordinates.
$\rho$ acts on the Grassmannian $\Gr(k,V) \cong \Gr(k,n)$ by $\rho\cdot W := \rho(W)$. This action preserves $\OPtd \subset \Gr(k,V)$ and normalizes the $T$-action on $\Gr(k,V)$ via 
\[
\rho\cdot t\cdot W = \rho(t) \cdot \rho \cdot W
\]
where 
\[
    \rho(\operatorname{diag}(t_1,t_2, \dots, t_n)) = \operatorname{diag}(t_n,t_1,t_2, \dots, t_{n-1}).
\]
Thus $\rho$ acts $H_{T}^*(\mathrm{pt})$-semilinearly on $H_{T,c}^*(\OPtd) = H_c^*(X_{k,n}^\circ)$. The following is the main theorem in this paper

\begin{theorem}\label{thm:main_thm_1.1}
    Let $k$ and $n$ be coprime, and let $\Bbbk$ be an algebraically closed field with characteristic coprime to $\ell$ and $n$. Then $\rho$ acts trivially on $H_c^*\big((X_{k,n}^\circ)_{\Bbbk},\mathbb{Q}_{\ell}\big)$.
\end{theorem}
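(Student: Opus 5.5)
The plan is to exploit the fact that $\rho$ has finite order $n$ and that, when $\gcd(k,n)=1$, its action on $X_{k,n}^\circ$ is closely tied to the torus. First, $\rho^n=\mathrm{id}$ on $\Gr(k,n)$. Hence the induced action on $H_c^*(X_{k,n}^\circ,\mathbb{Q}_\ell)$ is semisimple, with eigenvalues $n$-th roots of unity. Triviality is therefore equivalent to the equality of Lefschetz-type traces: for each $j$, the trace of $\rho^j$ on each graded piece must equal the dimension. Since one expects no cancellation issues, I would instead work degree by degree using weights.

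The main route is a deformation, or connectedness, argument. Consider the group $N=T\rtimes\langle\rho\rangle\subset \PGL_n$ acting on $\OPtd$. On the quotient $X_{k,n}^\circ=\OPtd/T$, the action of $\rho$ depends only on its class in $N/T$. The key claim to prove is that for $\gcd(k,n)=1$, the element $\rho$ is conjugate within $N$ to $t\rho$ for every $t\in T$. This is the standard fact that, when the twist is regular elliptic, $t\mapsto \rho t\rho^{-1}t^{-1}$ is surjective onto $T$. That fact alone does not move the class in $N/T$, however. So instead I would show that the action of $\rho$ on $\OPtd$ extends to an action of a \emph{connected} group on $\OPtd$ commuting with $T$, modulo $T$.

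Concretely, choose an $n$-th root $\zeta$ of unity, available since $\operatorname{char}\Bbbk\nmid n$, and diagonalize $\rho$ as $\rho=h\,\mathrm{diag}(1,\zeta,\dots,\zeta^{n-1})\,h^{-1}$ with $h$ the Fourier matrix. Then I would exploit the twisted cyclic shift. On $\OPtd$, the operator $\rho$ composed with the torus element $\mathrm{diag}(1,\dots,1,(-1)^{k-1})$ acts compatibly on cyclic Plücker coordinates. The key geometric input I would establish is that $X_{k,n}^\circ$ is an affine variety whose cohomology is generated, as an algebra, by classes pulled back from the torus $\prod_r\{\Delta_{I_r}\neq0\}/T$. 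These are the logarithmic forms $d\log(\Delta_{I_r}/\cdots)$, and via the cluster structure $X_{k,n}^\circ$ is a cluster variety whose frozen variables are the cyclic minors. I would prove a weight-graded statement: $\mathrm{gr}^W H_c^*$ is spanned by products of classes coming from frozen and mutable cluster tori, built by Deodhar or Richardson decompositions, that are individually $\rho$-stable up to sign. Then I would compute that $\rho$ permutes the $n$ frozen minor ratios cyclically. Modulo $T$, the invariant characters of $T$ on the $\Delta_{I_r}$ form a lattice on which the cyclic permutation acts through a quotient. Coprimality would then force this action on the relevant invariant sublattice to be trivial in cohomology, because its character lattice has rank $n-1$ with $\rho$ acting by the regular representation minus trivial, which is exactly killed by the quotient by $T$.

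The main obstacle is the middle step: controlling the action on the non-pure part of $H_c^*$ beyond the frozen torus classes. My first attempt would be to reduce to point counts. Using the twisted form $X^{\circ'}$ and Frobenius twisted by $\rho^j$, I would compute $\mathrm{tr}(\rho^j F)$ via the Lefschetz formula as $\#X^{\circ,\rho^jF}(\mathbb{F}_q)$. I would then show these counts are independent of $j$ by an explicit count of subspaces fixed by a Singer-type Frobenius. Together with purity of each $\mathrm{gr}^W$ (the variety being of mixed Tate type), this independence would force every eigenvalue of $\rho$ to be $1$.
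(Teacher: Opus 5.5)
There is a genuine gap: none of your three routes reaches the conclusion. You never carry out the connected-group step, and it cannot work in general. Remark~\ref{rem:bij_hope} records that the automorphism group of $X_{2,5}^\circ$ is the finite group $D_{10}$, so any connected group acts trivially on $X_{2,5}^\circ$. But $\rho$ does not act trivially: it has only two fixed points on this surface.

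The frozen-torus step is vacuous. For $\gcd(k,n)=1$ the map $T\to\mathbb{G}_m^n/\mathbb{G}_m$, $t\mapsto(\prod_{i\in I_r}t_i)_r$, is an isomorphism of tori. Indeed, on $\mathbb{Z}^n/\mathbb{Z}(1,\dots,1)$ the circulant of $k$ consecutive ones has determinant $\pm\prod_{j=1}^{n-1}\frac{1-\zeta^{jk}}{1-\zeta^{j}}=\pm1$; this is the freeness lemma in another guise. So $\prod_r\{\Delta_{I_r}\neq0\}/T$ is a point, and the only class you can pull back from it is $1$. Your ``regular representation minus trivial, killed by $T$'' computation says exactly this. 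Hence all of $H_c^*(X_{k,n}^\circ)$ lies in the part you call ``beyond the frozen torus classes''. Your claim that this part is spanned by classes of cluster tori that are individually $\rho$-stable up to sign is unsupported, since $\rho$ carries a seed to a different seed. Even if true, ``up to sign'' would not exclude eigenvalues $\neq 1$.

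The point-count fallback also has three holes:
\begin{enumerate}
\item Independence of $j$ at $j=0$ needs the untwisted count $\#\OPtd(\F_q)$. That is the Galashin--Lam formula, which Theorem~\ref{thm:main_thm_1.2} is meant to derive from this theorem.
\item For $\gcd(j,n)>1$, $\rho^j$ is not a Singer cycle, so no Moore-determinant count is available.
\item Purity of $\mathrm{gr}^W$ does not stop $\rho$ from acting by the same nontrivial character on the weight-$w$ parts of $H^i_c$ and $H^{i+1}_c$. These cancel in every alternating trace, so you would still need the even-degree vanishing of Theorem~\ref{thm:cohom-dim}, and counts over all $\F_{q^m}$ to separate weights.
\end{enumerate}
With those inputs imported, $j=1$ over every $\F_{q^m}$ would in fact suffice. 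But the argument would then presuppose the point-count identity it is supposed to explain.

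The missing idea is to apply Lefschetz to $\rho$ itself, which your first paragraph sets up and then abandons. Since $X_{k,n}^\circ$ is smooth and $\rho$ has order $n$ prime to $\operatorname{char}\Bbbk$, Theorem~\ref{thm:lef-finiteorder} gives $\sum_i(-1)^i\operatorname{Tr}(\rho\mid H^i_c)=\#(X_{k,n}^\circ)^\rho$. The right side is computable:
\begin{itemize}
\item the $\rho$-fixed points of $\Gr(k,n)$ are the spans $V_I$ of $k$ eigenlines of $\rho$;
\item they lie in $\OPtd$, because the cyclic minors are Vandermonde determinants;
\item fixed points of the quotient lift to fixed points of $\OPtd$, because $s\mapsto\rho(s)s^{-1}$ is surjective on $T$ (the fact you mention and set aside);
\item the $T$-orbit of $V_I$ meets the fixed locus exactly in $\{V_{I+m}\}$.
\end{itemize}
This gives $\frac1n\binom nk$ fixed points, one per rotation orbit of $k$-subsets, all orbits being free by coprimality. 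Theorem~\ref{thm:cohom-dim} puts $H_c^*$ in even degrees with total dimension $\frac1n\binom nk$. So the trace of $\rho$ is a sum of $\frac1n\binom nk$ roots of unity that equals $\frac1n\binom nk$. The triangle inequality forces every eigenvalue to be $1$, and semisimplicity gives $\rho=\mathrm{id}$. No weights, twisted forms, or cluster structure are needed.
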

The above theorem implies that the natural isomorphism of the compactly supported \'etale cohomologies of $X_{k,n}^\circ$ and $X_{k,n}^{\circ'}$ given by $m_g^*$ is in fact an isomorphism of $\Gal(\overline{\mathbb{F}_q}/\F_q)$-representations (i.e. as $\overline{\mathbb{Q}_{\ell}}[\Fr]$-modules).

I will then show the relationship in point counts:
\begin{theorem}\label{thm:main_thm_1.2}
    Let $k$ and $n$ be coprime and let $q$ be coprime to $n$. Then
    \begin{align*}
    \frac{|\OPtd(\mathbb{F}_q)|}{|T(\mathbb{F}_q)|} = \frac{|\OPtdtwist(\mathbb{F}_q)|}{|T'(\mathbb{F}_q)|} = \frac{|\Gr(k,n)(\mathbb{F}_q)|}{|T'(\mathbb{F}_q)|}.
    \end{align*}
\end{theorem}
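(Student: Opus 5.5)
The plan is to deduce Theorem \ref{thm:main_thm_1.2} from Theorem \ref{thm:main_thm_1.1} using the Grothendieck--Lefschetz trace formula, together with a partition of $\Gr(k,n)(\mathbb{F}_q)$ coming from the anisotropic torus.

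\textbf{Step 1: reduce to the quotients.} Since $\gcd(k,n)=1$, $T$ acts freely on $\OPtd$, and likewise $T'$ acts freely on $\OPtdtwist$. Both tori are connected, so Lang's theorem makes every $T$-torsor over $\mathbb{F}_q$ trivial, and similarly for $T'$. Hence every $\mathbb{F}_q$-point of $X_{k,n}^\circ$ lifts to exactly $|T(\mathbb{F}_q)|$ points of $\OPtd(\mathbb{F}_q)$. This gives
\[
|X_{k,n}^\circ(\mathbb{F}_q)| = \frac{|\OPtd(\mathbb{F}_q)|}{|T(\mathbb{F}_q)|},\qquad |X_{k,n}^{\circ'}(\mathbb{F}_q)| = \frac{|\OPtdtwist(\mathbb{F}_q)|}{|T'(\mathbb{F}_q)|}.
\]

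\textbf{Step 2: compare the two Frobenius actions.} Translation by $g$ identifies $(X_{k,n}^{\circ'})_{\overline{\mathbb{F}_q}}$ with $(X_{k,n}^\circ)_{\overline{\mathbb{F}_q}}$. Using $g^{-1}\Fr(g)=\rho$, the Frobenius of the twisted form is carried to $\rho\circ\Fr$ on $X_{k,n}^\circ$, up to the convention for the order of composition. So on $H^*_c$ the twisted Frobenius acts as $\Fr^*\circ\rho^*$.

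Theorem \ref{thm:main_thm_1.1} applies here because $\operatorname{char}\mathbb{F}_q$ is prime to $n$, and it says $\rho^*=\mathrm{id}$. The two Frobenius actions on compactly supported cohomology therefore agree, and the trace formula gives $|X_{k,n}^\circ(\mathbb{F}_q)|=|X_{k,n}^{\circ'}(\mathbb{F}_q)|$. This proves the first equality.

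\textbf{Step 3: the second equality.} I would show that every $\mathbb{F}_q$-rational $V\in\Gr(k,n)(\mathbb{F}_q)$ already lies in $\OPtdtwist$. The idea is to identify $\mathbb{F}_q^n$ with $\mathbb{F}_{q^n}$ so that $T'$ becomes multiplication by $\mathbb{F}_{q^n}^\times/\mathbb{F}_q^\times$. After base change to $\overline{\mathbb{F}_q}$, the basis $g(e_i)$ is the eigenbasis of $T'$, which corresponds to the Galois conjugate embeddings $\mathbb{F}_{q^n}\otimes\overline{\mathbb{F}_q}\cong\overline{\mathbb{F}_q}^n$. Frobenius permutes this basis cyclically.

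For a rational $V$, the Plücker coordinate $\Delta_{I_r}(g^{-1}V)$ vanishes exactly when $V$ meets the span $U_r$ of $n-k$ consecutive eigenlines nontrivially. Suppose $V\cap U_r\neq 0$. Applying Frobenius, $V$ meets every cyclic shift $U_{r+j}$. The step I expect to be the main obstacle is deriving a contradiction from this using $\gcd(k,n)=1$. My first attempt would be a dimension or Moore-determinant argument: take a nonzero rational $\mathbb{F}_q$-linear functional vanishing on $V$; it corresponds to a linearized polynomial of $q$-degree below $n$ and has at most $q^{k}$ roots. I would then combine this with the cyclic structure, and if that fails, fall back on counting via Step 1 and the known formula.

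Given that every rational $V$ lies in $\OPtdtwist$, we get $\OPtdtwist(\mathbb{F}_q)=\Gr(k,n)(\mathbb{F}_q)$. Since $T'(\mathbb{F}_q)$ acts freely on this set, dividing by $|T'(\mathbb{F}_q)|$ gives the second equality.
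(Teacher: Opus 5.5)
Your Steps 1 and 2 are the paper's argument: Corollary~\ref{cor:cor_of_LS} gives the quotient counts, and Theorem~\ref{thm:main_thm_1.1} together with the twisted Frobenius $\Fr^*\circ\rho^*$ and Grothendieck--Lefschetz gives the first equality. Your translation of $\Delta_{I_r}(g^{-1}V)=0$ into $V\cap U_r\neq 0$ is also correct.

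The gap is Step 3. The claim $\OPtdtwist(\F_q)=\Gr(k,n)(\F_q)$ is the entire content of the second equality, and you leave it unproved. The route you sketch is also misdirected:
\begin{itemize}
\item Coprimality of $k$ and $n$ plays no role here; the claim holds for all $k,n$.
\item The Frobenius-orbit observation (that $V$ meets every $U_{r+j}$) is not needed.
\item A \emph{rational} functional vanishing on $V$ cannot give a contradiction by root counting. For instance, $x\mapsto\operatorname{Tr}_{\F_{q^n}/\F_q}(ax)$ is a linearized polynomial of $q$-degree $n-1$ with $q^{n-1}$ roots.
\end{itemize}
Your fallback of importing the Galashin--Lam count would assume the identity \eqref{eq:point-count-top} that the theorem is meant to recover.

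The missing ingredient is the Moore determinant, Lemma~\ref{lem:moore_det_consequence}. In your $\F_{q^n}$ model, write $V=V_0\otimes_{\F_q}\overline{\F_q}$, where $V_0\subset\F_{q^n}$ has $\F_q$-basis $w_1,\dots,w_k$.
\begin{itemize}
\item Up to rescaling the eigenvectors and an affine relabeling $j\mapsto \pm j+c$ of $\Z/n\Z$, $g^{-1}V$ is the row span of the $k\times n$ matrix $(w_i^{q^j})$. This matrix is well defined on $\Z/n\Z$ since $w_i^{q^n}=w_i$, and neither the rescaling nor the relabeling affects the nonvanishing of cyclic minors.
\item The minor on the cyclic interval $\{s,\dots,s+k-1\}$ is the Moore determinant of $w_1^{q^s},\dots,w_k^{q^s}$.
\item This determinant is nonzero. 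The $q^s$-power map is an $\F_q$-linear automorphism of $\F_{q^n}$, so these elements remain $\F_q$-independent.
\end{itemize}
In your linearized-polynomial language: a vanishing minor yields $c_0,\dots,c_{k-1}\in\overline{\F_q}$, not all zero, with $\sum_{m<k}c_m y^{q^m}=0$ for every $y\in V_0^{q^s}$. That is a nonzero polynomial of degree at most $q^{k-1}$ with $q^k$ roots, a contradiction. The useful functional is the non-rational one supported on the $k$ coordinates of the interval, not a rational one.

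The paper reaches the same matrix shape $(v_1,v_1^q,\dots,v_1^{q^{n-1}})$ by passing to the twisted form $\Gr'(k,n)$ and applying Lang's theorem to $\GL_k$. Your eigenbasis/conjugate-embedding setup reaches it more directly, once the Moore step is supplied.
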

with the last equality following from a Moore determinant argument showing that $|\OPtdtwist(\mathbb{F}_q)| = |\Gr(k,n)(\mathbb{F}_q)|$.

I will cover the necessary preliminaries in section~\ref{sec:prelims} before proving the main theorems in section~\ref{sec:main}.
\subsection{Acknowledgments}
Thank you to Thomas Lam for suggesting this problem to me. Thank you to Thomas Lam and David Speyer for insightful discussions related to this project. The author of this paper was supported by NSF RTG grant DMS 1840234 while working on this project.
\section{Preliminaries}\label{sec:prelims}
An important well-known property of $\OPtd$ when $k,n$ are coprime is that the natural torus action of the split torus $T = \mathbb{G}_m^n/\mathbb{G}_m$ in $\PGL(n)$ is free and the quotient $\OPtd/T$ is smooth. \cite[Proposition~1.6]{GLqtcat} proves a more general statement about when $T$ acts freely on $\Pi^\circ_f$. I include a simpler argument for $\OPtd$.

\begin{lemma}
When $k$ and $n$ are coprime, the $T$-action on $\OPtd$ is free.
\end{lemma}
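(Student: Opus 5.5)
Suppose $t=\operatorname{diag}(t_1,\dots,t_n)$, taken up to scalars, fixes a point $V\in\OPtd$. The plan is to show that $t$ is a scalar matrix, so that its image in $T$ is trivial. Since a nontrivial stabilizer could be a non-reduced group scheme, I will argue scheme-theoretically, i.e.\ on $R$-points for an arbitrary $\Bbbk$-algebra $R$. Equivalently, I will check that the stabilizer group scheme is trivial by showing that the action map on Pl\"ucker coordinates forces $t\in\mathbb{G}_m$.

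\textbf{Step 1: translate the fixed-point condition.} The condition $tV=V$ means the Pl\"ucker vector of $tV$ is a scalar multiple of that of $V$. Since $\Delta_I(tV)=\bigl(\prod_{i\in I}t_i\bigr)\Delta_I(V)$, there is a unit $c$ with
\[
\Bigl(\prod_{i\in I}t_i\Bigr)\Delta_I(V)=c\,\Delta_I(V)\quad\text{for all } I .
\]
On $\OPtd$ every cyclic minor $\Delta_{I_r}(V)$ is a unit, so this gives
\[
\prod_{i=r+1}^{r+k}t_i=c\quad\text{for all } r\in\mathbb{Z}/n\mathbb{Z}.
\]

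\textbf{Step 2: consecutive cyclic intervals.} Dividing the equations for $r$ and $r-1$ gives $t_{r+k}=t_r$ for every $r$. In other words, $t_i$ depends only on $i$ modulo the subgroup of $\mathbb{Z}/n\mathbb{Z}$ generated by $k$.

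\textbf{Step 3: use coprimality.} Since $\gcd(k,n)=1$, the element $k$ generates $\mathbb{Z}/n\mathbb{Z}$. Hence $t_1=\cdots=t_n$, so $t$ is scalar and trivial in $T=\mathbb{G}_m^n/\mathbb{G}_m$. This step also shows where coprimality is needed: if $d=\gcd(k,n)>1$, then the torus elements with $t_i$ constant on residue classes mod $d$ and $\prod_{j=1}^{k}t_j$ fixed give nontrivial stabilizers.

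\textbf{Main obstacle.} The only subtle point is passing from the projective equality of Pl\"ucker vectors to the existence of the unit $c$ over a general base $R$. This is fine because Pl\"ucker coordinates are defined up to units, and the nonvanishing of $\Delta_{I_1}$ allows us to normalize $\Delta_{I_1}=1$. After that normalization we get $c=\prod_{i\in I_1}t_i$ directly, and the rest of the argument is the cancellation in Steps 2 and 3.
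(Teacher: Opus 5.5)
Your proof is correct and is the same argument as the paper's: Pl\"ucker coordinates scale by $\prod_{i\in I}t_i$, nonvanishing of the cyclic minors forces $\prod_{i\in I_r}t_i=c$ for all $r$, consecutive quotients give $t_{r+k}=t_r$, and $\gcd(k,n)=1$ makes $t$ scalar. Your extra care with $R$-points (using that $\OPtd$ lies in the chart $\Delta_{I_1}\neq 0$, so $V$ has a unique representative with the identity in columns $I_1$) is a mild strengthening of what the paper writes, since the paper only treats field-valued points, although its computation goes through verbatim over $R$. Only your parenthetical about $d>1$ is loose: elements constant on residues mod $d$ satisfy the cyclic-interval equations, but they stabilize only special points $V$, not every point of $\OPtd$.
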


\begin{proof}
Fix $W \in \OPtd(\Bbbk) \subseteq \Gr(k,n)(\Bbbk)$ and suppose $t\in T(\Bbbk)$ stabilizes $W$. Choose a lift
$\widetilde t=(t_1,\dots,t_n)\in (\Bbbk^\times)^n$ and a $k$-by-$n$ matrix $M_W$ whose rows form a basis of $W$. In terms of the Pl\"ucker embedding, the condition
$t\cdot W=W$ implies there exists $\lambda \in \Bbbk^\times$ such that for any $k$-subset $I\subset [n]$,
\[
    \Delta_I\left(M_W \cdot \widetilde t\right) \;=\; \lambda\Delta_I(M_W)
\]
where $\Delta_I$ is the Pl\"ucker coordinate for the subset $I$. The diagonal action of $\widetilde{t}$ scales the Pl\"ucker coordinate by the
character $\prod_{i\in I} t_i$, i.e.
\[
    \Delta_I\left(M_W \cdot \widetilde t\right) \;=\; \Bigl(\prod_{i\in I} t_i\Bigr)\,\Delta_I(M_W).
\]
Since $W \in \OPtd(\Bbbk)$, $\Delta_{[i,i+k-1]}(M_W)\neq 0$ for all cyclic intervals $[i,i+k-1]$. Comparing coefficients in
$\Delta_{[i,i+k-1]}(M_W \cdot \widetilde t) = \lambda\Delta_{[i,i+k-1]}(M_W)$ gives, for every $i\in \Z/n\Z$,
\[
t_i\,t_{i+1}\cdots t_{i+k-1} \;=\; \lambda \quad\text{(indices mod $n$)}.
\]
Dividing the equation for $i+1 \bmod n$ by the equation for $i$ yields
\[
\frac{t_{i+1}\cdots t_{i+k}}{t_i\cdots t_{i+k-1}} \;=\; 1
\qquad\Longrightarrow\qquad
t_{i+k}\;=\;t_i
\quad\text{(indices mod $n$)}.
\]
Thus $t_{i+mk}=t_i$ for all $m$. Because $\gcd(k,n)=1$, the map $i\mapsto i+k$ generates all
residues modulo $n$, so all coordinates are equal: $t_1=\cdots=t_n=a$ and hence $t = 1$ in $\PGL(n)(\Bbbk)$.
\end{proof}

\begin{lemma}\label{lem:smoothness}
    The scheme theoretic quotient $(\OPtd/T)_{\Bbbk} = (X_{k,n}^\circ)_{\Bbbk}$ is smooth over $\Bbbk$.
\end{lemma}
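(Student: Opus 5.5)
The plan is to deduce smoothness of the quotient from smoothness of $\OPtd$ together with freeness of the $T$-action, by exhibiting $\OPtd \to X_{k,n}^\circ$ as a $T$-torsor.

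First, $\OPtd$ is smooth over $\Bbbk$. It is an open subscheme of $\Gr(k,n)$, being the complement of the hypersurfaces $\{\Delta_{I_r}=0\}$, and $\Gr(k,n)$ is smooth.

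Second, I would build an explicit slice, so that the problem does not rest on general GIT. In $\OPtd$ the coordinate $\Delta_{I_0}$ is nonzero, so every $V$ is the row span of a unique matrix $[\,\mathrm{Id}_k \mid A\,]$; thus $\OPtd$ is an open subscheme of $\mathbb{A}^{k(n-k)}$. The torus acts on the matrix entries by characters. Representing $t=\operatorname{diag}(t_1,\dots,t_n)$, we may use it to normalize the cyclic minors. The freeness lemma shows that the characters $\chi_r=\prod_{i\in I_r}t_i$, taken modulo the overall scalar, give an injective map on $\Bbbk$-points. I would upgrade this to the statement that the homomorphism
\[
T\to \mathbb{G}_m^{n}/\mathbb{G}_m,\qquad t\mapsto (\chi_r(t))_r,
\]
is an isomorphism of group schemes, not merely a bijection on points. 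The determinant of the circulant matrix with $k$ consecutive ones is $\pm k$ times a product of roots of unity factors. More precisely, the relevant cokernel on character lattices has order $\prod_{\zeta^n=1,\zeta\ne1}(1+\zeta+\dots+\zeta^{k-1})$, and this equals $1$ when $\gcd(k,n)=1$. Hence the characters $\chi_r/\chi_0$ form a $\Z$-basis of $X^*(T)$.

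Third, given this, define the $T$-equivariant morphism
\[
\phi:\OPtd\to T,\qquad V\mapsto \bigl(\Delta_{I_r}(V)/\Delta_{I_0}(V)\bigr)_r,
\]
read in $T$ via the isomorphism above, and set $S=\phi^{-1}(1)$. Then the action map $T\times S\to \OPtd$, $(t,s)\mapsto t\cdot s$, is an isomorphism, with inverse $V\mapsto(\phi(V),\phi(V)^{-1}V)$. Consequently $\Bbbk[\OPtd]^T=\Bbbk[S]$ and $X_{k,n}^\circ\cong S$. Finally, $S$ is a retract of the smooth variety $\OPtd$, and more to the point $\OPtd\cong T\times S$ with $T$ smooth. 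Smoothness of a product over a field descends to each factor, since $S\times T\to T$ is smooth and we can pull back along the identity section. Therefore $S$ is smooth.

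The main obstacle is the second step: verifying that the cyclic-interval characters generate the full character lattice of $T$, not just a finite-index sublattice. Freeness on points alone does not exclude a finite étale or infinitesimal stabilizer. I would compute the index via the discrete Fourier transform, which gives the resultant of $x^n-1$ and $1+x+\dots+x^{k-1}$ up to the factor from $\zeta=1$. This resultant equals $\pm1$ exactly when $\gcd(k,n)=1$.
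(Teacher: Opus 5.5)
Your argument is correct, but it takes a different route from the paper. The paper argues abstractly. Freeness makes $\pi\colon\OPtd\to X_{k,n}^\circ$ a principal $T$-bundle, hence smooth and surjective, and smoothness then descends from $\OPtd$ to the base. You instead produce an explicit global slice
\[
S=\phi^{-1}(1)=\{V\in\OPtd:\Delta_{I_r}(V)=\Delta_{I_0}(V)\text{ for all }r\},
\]
with $\OPtd\cong T\times S$ equivariantly. Then $\Bbbk[\OPtd]^T=\Bbbk[S]$, and $S$ is smooth because it is a retract of $\OPtd$ (by infinitesimal lifting).

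The paper's route is shorter and purely formal. Yours proves the torsor property that the paper imports from general quotient theory, where freeness on points alone is not quite the right hypothesis. It also shows the torsor is trivial, so $|\OPtd(\F_q)|=|T(\F_q)|\cdot|X_{k,n}^\circ(\F_q)|$ holds without Lang's theorem, and it identifies $X_{k,n}^\circ$ concretely.

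Two simplifications are available:
\begin{enumerate}
\item Your lattice step needs no resultant. The identity $\chi_{r+1}-\chi_r=e_{r+k+1}-e_{r+1}$ (the paper's division trick read on characters), together with the fact that $k$ generates $\Z/n\Z$, already shows that the $\chi_r-\chi_0$ span $X^*(T)$.
\item The same computation run on $R$-points shows the stabilizers are trivial as group schemes. Finite \'etale stabilizers were in any case already excluded by freeness on $\bar\Bbbk$-points, so only infinitesimal ones were ever at issue.
\end{enumerate}

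Finally, your claim that $S\times T\to T$ is smooth is asserted without justification; as written it is what you are trying to prove. Either rely on the retract argument, or note that $d\phi$ is everywhere surjective, because $\phi$ composed with any orbit map is a translation of $T$.
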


I will prove this Lemma at the end of subsection~\ref{subsec:quotients_H1} after a necessary discussion of quotients.

\subsection{Cohomology of the top-dimensional open positroid variety}\label{subsec:cohom}
Galashin--Lam determine the mixed Hodge polynomial of
$P(\Pi_f^\circ;q,t)$ in terms of higher Ext groups of Verma modules or homology of Rouquier complexes~\cite{GLqtcat}. This mixed Hodge polynomial recovers both the ordinary Poincar\'e polynomial
and the finite-field point counts (after a standard renormalization). Galashin--Lam's Theorem 1.1 computes the Poincar\'e polynomial of $\OPtd$ and implies the weaker statement I will need:

\begin{theorem}
\label{thm:cohom-dim}
When $k$ and $n$ are coprime, the cohomology of $H_c^*(X^\circ_{k,n})$ is concentrated in even degrees with Euler characteristic given by the rational Catalan number
\[
    \chi_c\bigl(X^\circ_{k,n}\bigr) = \mathrm{dim}\bigl(H_c^*(X^\circ_{k,n})\bigr) = \frac{1}{n}\binom{n}{k}.
\]
\end{theorem}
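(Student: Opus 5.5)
The plan is to deduce both assertions from Galashin--Lam's computation of the mixed Hodge polynomial of $\Pi^\circ_{f_{k,n}}$ in \cite{GLqtcat}, combined with Poincar\'e duality on the smooth quotient $X^\circ_{k,n}$ (Lemma~\ref{lem:smoothness}). The passage from $\OPtd$ to $X^\circ_{k,n}$ is harmless. Since $T$ acts freely, the quotient map $\OPtd\to X^\circ_{k,n}$ is a $T$-torsor, and $T\cong\mathbb{G}_m^{n-1}$ is split. Its cohomology therefore factors as $H^*(\OPtd)\cong H^*(X^\circ_{k,n})\otimes H^*(T)$, compatibly with mixed Hodge and weight structures; Galashin--Lam phrase their result in exactly this normalized form. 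I would therefore work directly with the mixed Hodge polynomial $P(X^\circ_{k,n};q,t)$.

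First, I would quote \cite[Theorem~1.1]{GLqtcat} for $f=f_{k,n}$ with $\gcd(k,n)=1$. It identifies $P(X^\circ_{k,n};q,t)$, up to a monomial renormalization, with the rational $(q,t)$-Catalan polynomial $C_{k,n-k}(q,t)$. This is a polynomial with nonnegative coefficients in which the cohomological degree enters only through even powers; equivalently, every nonzero $H^i(X^\circ_{k,n})$ has $i$ even, and the mixed Hodge structure is of Hodge--Tate type. Second, $X^\circ_{k,n}$ is smooth of pure dimension $d=k(n-k)-(n-1)$, so Poincar\'e duality gives $H^i_c\cong H^{2d-i}{}^\vee$. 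Since $2d-i\equiv i \pmod 2$, the compactly supported cohomology is also concentrated in even degrees. Consequently $\chi_c=\sum_i(-1)^i\dim H^i_c=\dim H^*_c$.

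Third, I would evaluate the total dimension. One route is to specialize $C_{k,n-k}(q,t)$ at $q=t=1$, which gives $\frac1n\binom nk$. An independent check uses point counts. The free torsor structure gives $|X^\circ_{k,n}(\F_q)|=|\OPtd(\F_q)|/(q-1)^{n-1}=\frac1{[n]_q}\qbinom nk$, which is a polynomial in $q$. By Katz's theorem on polynomial-count varieties, the $E$-polynomial of $X^\circ_{k,n}$ is this polynomial, and evaluating at $q=1$ yields $\chi_c=\frac1n\binom nk$.

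The main obstacle is that the theorem will be used for $\ell$-adic \'etale cohomology over an algebraically closed $\Bbbk$ of arbitrary characteristic coprime to $n$ and $\ell$, whereas Galashin--Lam work with singular cohomology over $\mathbb{C}$. I would handle this by spreading out. The family $\OPtd$, $T$, and hence $X^\circ_{k,n}$ are defined over $\Z[1/n]$, and by the smoothness lemma the quotient is smooth over this base. I would then check that the relevant direct images of constant $\ell$-adic sheaves are locally constant over $\Spec\Z[1/n\ell]$. The intended argument embeds $X^\circ_{k,n}$ in a smooth compactification with normal crossings boundary over the base, or uses the Deodhar/Richardson-type stratifications that Galashin--Lam already exploit, which are defined uniformly over $\Z$. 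Given this, smooth and proper base change together with Artin's comparison theorem transport both the parity vanishing and the dimension count from $\mathbb{C}$ to every such $\Bbbk$. If constructing such a compactification is awkward, the fallback is to use the point count directly: it is a polynomial in $q$ with integer coefficients, which determines the Frobenius eigenvalues up to weight. Evenness over $\overline{\F_q}$ would then follow from the Hodge--Tate and purity-type statement in \cite{GLqtcat} that $H^{2i}_c$ is pure of a single weight determined by $i$, together with the absence of odd-degree classes.
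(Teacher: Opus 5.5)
Over $\mathbb{C}$ your argument is essentially the paper's: the paper gives no proof here beyond citing \cite[Theorem~1.1]{GLqtcat}. Your Poincar\'e-duality step is correct and fills a bridge the paper leaves implicit: Galashin--Lam compute $H^*$, the statement concerns $H^*_c$, and $2d-i\equiv i \pmod 2$.

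There are two side issues. First, the reason you give for $H^*(\OPtd)\cong H^*(X^\circ_{k,n})\otimes H^*(T)$ is not valid in general. Torsors under split tori need not have split cohomology; $\mathbb{C}^2\setminus 0\to\mathbb{P}^1$ is a counterexample. Here the torsor does happen to be trivial. For $\gcd(k,n)=1$ the map $t\mapsto(\prod_{i\in I_r}t_i)_r$ induces an automorphism of $T$, since the circulant $0/1$ matrix has determinant $k$ and scales $(1,\dots,1)$ by $k$. Hence $V\mapsto[\Delta_{I_0}(V):\cdots:\Delta_{I_{n-1}}(V)]$ identifies $\OPtd$ with $T$ times a fibre. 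In any case Galashin--Lam state their result for the quotient, so you do not need this. Second, your Katz cross-check feeds in \eqref{eq:point-count-top}, which is exactly what Theorem~\ref{thm:main_thm_1.2} is meant to recover. As an independent check it is fine, but inside the paper it would be circular.

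The genuine gap is the passage to $\ell$-adic cohomology over $\Bbbk=\overline{\F}_p$ with $p\nmid n$. You are right that the paper uses the statement there without comment, and it matters. Evenness is exactly what turns $\sum_i(-1)^i\Tr(\rho)$ into $\sum_i m_i\zeta^i$ in the proof of Theorem~\ref{thm:main_thm_1.1}. Your proposal does not supply this step, for three reasons.
\begin{enumerate}
\item Constructibility of $R^if_!\mathbb{Q}_\ell$ over $\Spec\Z[1/n\ell]$ gives local constancy only on a dense open, so you get all but finitely many $p$. To reach every $p\nmid n$, your main route needs a smooth compactification with relative normal-crossings boundary over the whole base. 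You neither construct nor cite one, and its existence is not automatic.
\item Deodhar-type stratifications do not help with parity. They control only additive invariants such as $\chi_c$ and point counts, not the vanishing of odd cohomology.
\item The fallback fails outright. Point counts see only $\sum_i(-1)^i\Tr(\Fr^m\mid H^i_c)$. They give $\chi_c=\frac1n\binom nk$ (directly by Grothendieck--Lefschetz) but cannot exclude odd-degree classes cancelling against even ones. ``Absence of odd-degree classes'' is precisely what has to be shown. Moreover, the purity statement you attribute to \cite{GLqtcat} is not there, since their results are Hodge-theoretic over $\mathbb{C}$. It is also false: curious Lefschetz together with $\dim H^*=5$ forces some even degree of $X^\circ_{3,7}$ to carry two different weights.
\end{enumerate}

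To close the gap you would need one of two things. Either produce an explicit compactification of this kind over $\Z[1/n\ell]$, or verify that Galashin--Lam's argument goes through verbatim with mixed $\ell$-adic sheaves over $\overline{\F}_p$.
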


The original motivation of Galashin and Lam comes from a conjecture arising from the works of \cite{STZ17} and \cite{STWZ19}:

\begin{conjecture}[\cite{GLqtcat}, Conjecture~1.21]
\leavevmode\par
    \begin{enumerate}
    \item There exists a non-algebraic deformation retract 
    \[
    r:X^\circ_{k,n} \to J_{k,n-k}
    \]
    from $X^\circ_{k,n}$ onto the compactified Jacobian $J_{k,n-k}$ of the plane curve singularity $x^k=y^{n-k}$.
    \item In compactly supported cohomology,
    \[
    r^*: H_c^*(J_{k,n-k}) \to H_c^*(X^\circ_{k,n})
    \] 
    sends the weight filtration to the perverse filtration.
    \end{enumerate}
\end{conjecture}

Such an $r$ is expected to arise from a wild non-abelian Hodge correspondence. Constructing such a retraction $r$---without necessarily satisfying (2)---would still imply Theorem~\ref{thm:cohom-dim}. Adding this piece to the argument in this paper would yield a simpler proof of formula \eqref{eq:point-count-top} without passing through the work of Galashin--Lam, which already proves a stronger statement than formula \eqref{eq:point-count-top}. Vivek Shende has an explicit idea on how to construct such a deformation $r$ \cite{ShendeBlog}, although to my knowledge there are still several steps to be completed.


\begin{remark}
    Constructing a retract $r$ as above which also is $\rho$-equivariant could lead to another proof of Theorem~\ref{thm:main_thm_1.1} for characteristic zero fields: such a retraction would deform the rotation automorphism $\rho$ to some other possibly non-algebraic automorphism $\overline{\rho}$ of the compactified Jacobian $J_{a,b}$. This $\overline{\rho}$ would likely be homotopic to the identity and so it would act trivially on the compactified Jacobian's cohomology. This would imply Theorem~\ref{thm:main_thm_1.1} over $\mathbb{C}$ and applying smooth base change to the compactly supported \'etale cohomology of $H_c^*(X^\circ_{k,n})$ would give the result over characteristic zero fields.
\end{remark}

\subsection{Grothendieck--Lefschetz}\label{subsec:GL-rationality}

Throughout this subsection let $q=p^r$ be a power of the prime $p$, let $\ell\neq p$ be a fixed prime,
and let $X$ be a variety over $\mathbb{F}_q$. Let
$\Fr_q\in\Gal(\overline{\mathbb{F}}_q/\mathbb{F}_q)$ denote the \emph{geometric Frobenius},
i.e.\ the inverse of the arithmetic Frobenius $a\mapsto a^q$.
For any $i\ge 0$, the $\ell$-adic compactly supported \'etale cohomology
$H^i_{c}(X_{\overline{\mathbb{F}}_q},\mathbb{Q}_\ell)$ carries a continuous action of
$\Gal(\overline{\mathbb{F}}_q/\mathbb{F}_q)$, and hence an endomorphism induced by $\Fr_q$.

\begin{theorem}[\cite{SGA4half}, Grothendieck--Lefschetz trace formula]\label{thm:GL-trace}
Let $X$ be a separated scheme of finite type over $\mathbb{F}_q$ and let $\ell\neq p$.
Then
\[
\#X(\mathbb{F}_q)
=
\sum_{i\ge 0}(-1)^i\Tr\!\Bigl(\Fr_q \,\big|\,
H^i_{c}(X_{\overline{\mathbb{F}}_q},\mathbb{Q}_\ell)\Bigr).
\]
More generally, for every $m\ge 1$,
\[
\#X(\mathbb{F}_{q^m})
=
\sum_{i\ge 0}(-1)^i\Tr\!\Bigl(\Fr_q^{\,m} \,\big|\,
H^i_{c}(X_{\overline{\mathbb{F}}_q},\mathbb{Q}_\ell)\Bigr).
\]
\end{theorem}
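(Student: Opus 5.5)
Since this is the classical trace formula, the plan follows Grothendieck's dévissage as presented in \cite{SGA4half} (Rapport), proving the more general statement for constructible sheaves. For a constructible $\mathbb{Q}_\ell$-sheaf $\mathcal{F}$ on $X$, write
\[
\Sigma(X,\mathcal{F}) := \sum_{x\in X(\mathbb{F}_q)} \Tr(\Fr_x\mid \mathcal{F}_{\bar x})
\]
for the geometric side and
\[
\Lambda(X,\mathcal{F}) := \sum_i (-1)^i \Tr\!\bigl(\Fr_q \mid H^i_c(X_{\overline{\mathbb{F}}_q},\mathcal{F})\bigr)
\]
for the cohomological side. The aim is to show $\Sigma(X,\mathcal{F})=\Lambda(X,\mathcal{F})$; the theorem is the case $\mathcal{F}=\mathbb{Q}_\ell$. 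The case $m\ge 1$ then follows by base change to $\mathbb{F}_{q^m}$, since $\Fr_{q^m}=\Fr_q^m$ on $H^i_c(X_{\overline{\mathbb{F}}_q},\mathbb{Q}_\ell)$.

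\textbf{Step 1: additivity.} Both $\Sigma$ and $\Lambda$ are additive in short exact sequences of sheaves and in open/closed decompositions $U\subset X\supset Z$. For $\Lambda$ this uses the long exact sequence
\[
\cdots\to H^i_c(U)\to H^i_c(X)\to H^i_c(Z)\to\cdots,
\]
which is $\Fr_q$-equivariant, together with the fact that alternating traces are additive along exact sequences. Both sides are also insensitive to nilpotent thickenings. Passing to an integral model over a finite extension of $\mathbb{Z}_\ell$, it suffices to prove the identity for torsion sheaves of $\mathbb{Z}/\ell^r$-modules. For these, traces are defined via perfect complexes, which works because Frobenius is compatible with the finite Tor-dimension of $R\Gamma_c$.

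\textbf{Step 2: reduction to curves.} Both sides are compatible with proper pushforward. For $\Sigma$ this is proper base change on stalks; for $\Lambda$ it is the Leray spectral sequence $H_c(Y,Rf_!\mathcal{F})=H_c(X,\mathcal{F})$. By Noetherian induction and the additivity of Step 1, one may stratify $X$ into affine pieces. Each affine piece is fibered over a lower-dimensional base with fibers that are affine curves. Applying the curve case fiberwise to $Rf_!\mathcal{F}$ and inducting on dimension reduces the problem to $X$ a smooth affine curve and $\mathcal{F}$ locally constant. Via a finite étale cover trivializing $\mathcal{F}$, this further reduces to sheaves of the form $\pi_*$ of a constant sheaf twisted by a representation of a finite group.

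\textbf{Step 3: the curve case, the main obstacle.} For a smooth curve, I would prove the full $L$-function identity
\[
\prod_{x\in|X|}\det\!\bigl(1-t^{\deg x}\Fr_x\mid\mathcal{F}\bigr)^{-1}
=\prod_i \det\!\bigl(1-t\Fr_q\mid H^i_c\bigr)^{(-1)^{i+1}}.
\]
Taking logarithmic derivatives recovers the trace identity for all $m$ simultaneously. The plan is Grothendieck's argument: realize the left side as a trace on the symmetric powers $X^{(d)}$ and identify $\Fr$-fixed points on $X^{(d)}$ with effective zero-cycles. For constant coefficients on a proper curve, use the Lefschetz fixed point formula for the graph of Frobenius. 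Here $\Gamma_{\Fr}$ meets the diagonal transversally, since $d\Fr=0$, so each $\mathbb{F}_q$-point contributes multiplicity one. That formula in turn rests on Poincaré duality and the cycle class of the diagonal.

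The open and twisted cases then follow by the additivity of Step 1 applied to the compactification and to the group action. The delicate points, which I expect to be the real work, are two. First, one must establish transversality and the Künneth/duality formalism for the graph of Frobenius. Second, one must handle wild ramification at the boundary; I would avoid it by using the finite-group reduction above, which keeps everything at the level of constant sheaves on covers.
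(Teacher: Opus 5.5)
The paper gives no proof of this statement; it cites \cite{SGA4half}. Your outline is the skeleton of Deligne's \emph{Rapport} there: sheaf coefficients, additivity, passage to $\mathbb{Z}/\ell^r$-coefficients and perfect complexes, d\'evissage via $Rf_!$ to lisse sheaves on smooth affine curves, trivialization by a Galois cover, and the fixed-point formula on the compactified cover using $d\Fr=0$.

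The gap is a step you treat as formal: ``the twisted cases then follow by the additivity of Step~1 applied to the group action.'' Let $\pi\colon X'\to X$ be Galois with group $G$, and let $\mathcal{F}$ correspond to a $\Lambda[G]$-module $M$, with $\Lambda=\mathbb{Z}/\ell^r$. Then $R\Gamma_c(X_{\overline{\mathbb{F}}_q},\mathcal{F})\simeq R\Gamma_c(X'_{\overline{\mathbb{F}}_q},\Lambda)\otimes^{L}_{\Lambda[G]}M$. The trace of $\Fr$ on the right is governed by the trace of $\Fr$ on the perfect $\Lambda[G]$-complex $R\Gamma_c(X'_{\overline{\mathbb{F}}_q},\Lambda)$, taken with values in $\Lambda[G]^\natural=\Lambda[G]/[\Lambda[G],\Lambda[G]]$. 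What must be shown is that this noncommutative trace equals the sum, over $x\in X(\mathbb{F}_q)$, of the conjugacy classes of the Frobenius elements at $x$. No additivity argument produces this.

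The fixed-point formula on the compactification of $X'$ can be applied to every twisted Frobenius $g\circ\Fr$ with $g\in G$, whose graph is still transverse to the diagonal. But it only yields identities for the $\Lambda$-valued traces $\Tr(g\circ\Fr)$. These determine the coefficients of the $\Lambda[G]^\natural$-valued trace only after multiplication by the centralizer orders $|C_G(g)|$.

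You cannot simply divide or average over $G$, because you cannot stay with $\mathbb{Q}_\ell$-coefficients. Lisse $\mathbb{Q}_\ell$-sheaves, such as Tate twists or the $R^if_!\mathbb{Q}_\ell$ that your Step~2 produces, generally have infinite monodromy and are trivialized by no finite cover. Meanwhile, the finite monodromy groups of $\mathbb{Z}/\ell^r$-sheaves typically have order divisible by $\ell$.

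What is missing is Deligne's $\Lambda[G]^\natural$-valued trace formalism, together with a device that makes the division legitimate. One option is to compute in $\mathbb{Z}_\ell[G]^\natural$, which is torsion-free, using the twisted fixed-point formula with $\mathbb{Q}_\ell$-coefficients on $X'$, and then reduce modulo $\ell^r$.

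A smaller point: the symmetric-power route to the $L$-function in Step~3 runs in the wrong direction. Writing $L(X,\mathcal{F},t)=\sum_d \Sigma(X^{(d)},\mathcal{F}^{(d)})t^d$ only helps once the trace formula is known on the $d$-dimensional $X^{(d)}$ with coefficients $\mathcal{F}^{(d)}$. Your induction on dimension has not provided that at this stage. In \cite{SGA4half}, symmetric powers are used afterwards, to upgrade the trace formula to the $L$-function identity modulo $\ell^r$. For the statement at hand you only need the trace identity for each $m$, which the fixed-point argument for $\Fr^m$ and its twists $g\circ\Fr^m$ gives directly.
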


This theorem mirrors the classical Lefschetz fixed point theorem, which expresses
the number of fixed points of an automorphism as an alternating sum of traces on cohomology.
In general, the classical Lefschetz theorem holds only for proper varieties.
However, for finite order automorphisms, an analogous statement remains valid for
quasi-projective varieties:

\begin{theorem}[\cite{DeligneLusztig1976}, Theorem~3.2 Lefschetz for finite order automorphisms]\label{thm:lef-finiteorder}
Let $X$ be a quasi-projective variety over an algebraically closed field $\Bbbk$,
let $\ell\neq \operatorname{char}(\Bbbk)$, and let $\sigma$ be a finite order automorphism
whose order is relatively prime to $\operatorname{char}(\Bbbk)$.
Assume that there are finitely many fixed points of $X^\sigma$ and $X$ is smooth.
Then
\[
\#X^\sigma
=
\sum_{i\ge 0}(-1)^i\Tr\!\Bigl(\sigma \,\big|\,
H^i_{c}(X,\mathbb{Q}_\ell)\Bigr).
\]
\end{theorem}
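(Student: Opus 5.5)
The strategy is the standard Deligne--Lusztig one. I will prove the stronger identity
\[
\sum_i(-1)^i\Tr\bigl(\sigma\mid H^i_c(X,\mathbb{Q}_\ell)\bigr)=\chi_c(X^\sigma)
\]
for any separated finite type $X$ and any finite-order $\sigma$ of order $m$ prime to $\operatorname{char}\Bbbk$. Since $X^\sigma$ is finite, $\chi_c(X^\sigma)=\#X^\sigma$, so smoothness is not actually needed. Write $L(\sigma,X)$ for the left-hand side; it is additive in the following sense.

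\textbf{Additivity.} If $Z\subset X$ is a closed $\sigma$-stable subvariety with open complement $U$, the long exact sequence
\[
\cdots\to H^i_c(U)\to H^i_c(X)\to H^i_c(Z)\to\cdots
\]
is $\sigma$-equivariant. Hence $L(\sigma,X)=L(\sigma,U)+L(\sigma,Z)$.

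\textbf{Stratification.} Let $G=\langle\sigma\rangle\cong\Z/m$. Stratify $X$ into locally closed $G$-stable pieces $X_H$, where $X_H$ is the set of points with stabilizer exactly $H$, as $H$ runs over subgroups of $G$. Each $X_H$ is locally closed because $X^{H'}$ is closed for every $H'$, and $X_H=X^H\setminus\bigcup_{H'\supsetneq H}X^{H'}$. The stratum $X_G$ equals $X^\sigma$, and there $\sigma$ acts trivially, so $L(\sigma,X_G)=\chi_c(X^\sigma)$. By additivity it therefore suffices to show $L(\sigma,X_H)=0$ for every proper subgroup $H\subsetneq G$.

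\textbf{Free strata.} On $X_H$ the group $G/H$ acts freely, and the image $\bar\sigma$ of $\sigma$ in $G/H$ is nontrivial. Since $X$ is quasi-projective, the quotient $Y=X_H/(G/H)$ exists and $\pi\colon X_H\to Y$ is a finite étale Galois cover. Then
\[
R\Gamma_c(X_H,\mathbb{Z}_\ell)=R\Gamma_c\bigl(Y,\pi_*\mathbb{Z}_\ell\bigr),
\]
and $\pi_*\mathbb{Z}_\ell$ is locally a free $\mathbb{Z}_\ell[G/H]$-module of rank one. The key input is that $R\Gamma_c$ of such a sheaf is represented by a perfect complex of $\mathbb{Z}_\ell[G/H]$-modules (SGA4$\tfrac12$, Rapport; finite cohomological dimension plus constructibility). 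The trace of a nontrivial group element on a finitely generated projective $\mathbb{Z}_\ell[G/H]$-module is zero, by comparing the Hattori--Stallings rank with that of a free module. Hence the alternating trace of $\bar\sigma$ on the perfect complex is $0$. After tensoring with $\mathbb{Q}_\ell$, this alternating trace equals $L(\sigma,X_H)$, so $L(\sigma,X_H)=0$.

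Summing over strata gives $L(\sigma,X)=\chi_c(X^\sigma)=\#X^\sigma$.

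\textbf{Main obstacle.} The hard step is the perfectness and projectivity statement: that $R\Gamma_c(X_H,\mathbb{Z}_\ell)$ is a perfect $\mathbb{Z}_\ell[G/H]$-complex for a free action. This is exactly where the hypothesis that the order is prime to the characteristic enters, through tameness of the cover and finiteness of cohomological dimension. If $\ell\nmid m$ there is a simpler route: $\mathbb{Q}_\ell[G/H]$ is semisimple, and $H^*_c(X_H)$ is a virtual multiple of the regular representation because $\pi_*\mathbb{Q}_\ell$ is. The general case needs the integral perfect-complex argument.
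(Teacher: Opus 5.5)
Your overall architecture (additivity over a $\sigma$-stable stratification by stabilizers, then showing the Lefschetz number vanishes on each stratum where $G/H$ acts freely) is the one in Deligne--Lusztig's proof of their Theorem~3.2, which the paper cites rather than proves. You are also right that smoothness is superfluous, since $\chi_c(X^\sigma)$ only sees the underlying finite set.

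The gap is the lemma you use to kill the free strata. It is \emph{not} true that a nontrivial element of $\Gamma$ has trace $0$ on every finitely generated projective $\mathbb{Z}_\ell[\Gamma]$-module. The vanishing of Hattori--Stallings ranks off the identity (Swan) holds over $\mathbb{Z}$, where no prime dividing $|\Gamma|$ is invertible. Over $\mathbb{Z}_\ell$ every prime other than $\ell$ is a unit, and the correct statement is only that characters of projective $\mathbb{Z}_\ell[\Gamma]$-modules vanish on elements whose order is divisible by $\ell$. For instance, if $\ell\nmid|\Gamma|$, the idempotent $|\Gamma|^{-1}\sum_{\gamma\in\Gamma}\gamma$ exhibits the trivial module $\mathbb{Z}_\ell$ as projective, and every element acts on it with trace $1$.

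Relatedly, your diagnosis of where $p\nmid m$ enters is off. Perfectness of $R\Gamma_c(X_H,\mathbb{Z}_\ell)$ over $\mathbb{Z}_\ell[G/H]$ holds for any free action, tame or not, so your argument never uses the hypothesis. It would therefore ``prove'' that the translation $x\mapsto x+1$ on $\mathbb{A}^1$ in characteristic $p$ (free, of order $p$) has Lefschetz number $0$. In fact it acts trivially on $H^2_c(\mathbb{A}^1)$, so its Lefschetz number is $1$. In the case the paper actually needs ($\ell\nmid n$), every $\mathbb{Z}_\ell[G/H]$-lattice is projective, so perfectness gives no information at all.

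Your fallback for $\ell\nmid m$ has the same hole. Write $\pi_*\overline{\mathbb{Q}}_\ell=\bigoplus_\chi\mathcal{L}_\chi$ over the characters $\chi$ of $G/H$. The virtual representation $H^*_c(X_H)$ is then $\sum_\chi\chi_c(Y,\mathcal{L}_\chi)\,\chi$, and this is a multiple of the regular representation only if $\chi_c(Y,\mathcal{L}_\chi)$ is independent of $\chi$. In characteristic $p$ that is a genuine theorem: the Euler--Poincar\'e formula $\chi_c(Y,\mathcal{L})=\operatorname{rk}(\mathcal{L})\,\chi_c(Y)$ for tamely ramified $\mathcal{L}$ (Deligne--Illusie). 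It fails for wild covers, e.g.\ the Artin--Schreier cover of $\mathbb{A}^1$, where the nontrivial $\chi$ give $\chi_c=0$ but the trivial one gives $1$.

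There are two ways to repair the argument.
\begin{enumerate}
\item Cite the tame Euler--Poincar\'e formula; this is exactly where tameness enters.
\item Follow Deligne--Lusztig. First show $L(\sigma,X_H)$ is a rational integer independent of $\ell$: spread out to a finite field with Frobenius $F$ and use $L(\sigma,X)=-\lim_{T\to\infty}\sum_{N\ge1}\#X^{\sigma F^N}\,T^N$. The coefficients are $\ell$-independent by the trace formula for the twisted Frobenius $\sigma F^N$. Then, on a stratum with $G/H\neq1$, choose an auxiliary prime $\ell'$ dividing $|G/H|$. This prime is $\neq p$ because $p\nmid m$, and that is where the hypothesis is really used. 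The generator $\bar\sigma$ of $G/H$ then has order divisible by $\ell'$, so its trace on each term of the perfect $\mathbb{Z}_{\ell'}[G/H]$-complex vanishes.
\end{enumerate}
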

It is important for the above formula that $X$ is smooth since this implies the fixed point set of $\sigma$ is smooth \cite[Proposition~3.4]{EDIXHOVEN}. In this case when $X^\sigma$ is finitely many fixed points, the fixed points of $X^\sigma$ are reduced isolated points.
\subsection{Rational forms and nonabelian \texorpdfstring{$H^1$}{H1}.}\label{subsec:rat_forms}
Let $\Bbbk'/\Bbbk$ be a finite Galois extension with group $G=\Gal(\Bbbk'/\Bbbk)$.
Let $Y'$ be a $\Bbbk'$-variety.
A \emph{$\Bbbk$-form} (or \emph{$\Bbbk$-rational structure}) on $Y'$ is a $\Bbbk$-variety $Y$ together with
a $\Bbbk'$-isomorphism $Y_{\Bbbk'}\simeq Y'$.
Two forms are identified if there is a $\Bbbk$-isomorphism between the $\Bbbk$-models compatible with the
chosen $\Bbbk'$-isomorphisms.

In general, descent data for $Y'$ relative to $\Bbbk'/\Bbbk$ are encoded by \emph{nonabelian} Galois cohomology. 
A nonabelian $1$-cocycle is a map $c\colon G\to \Aut_{\Bbbk'}(Y')$ satisfying
\[
c(gh) = c(g) \cdot {}^{g}c(h) \qquad (g,h\in G),
\]
and two cocycles $c,c'$ are cohomologous if there exists $\phi\in \Aut_{\Bbbk'}(Y')$ with
\[
c'(g) = \phi^{-1}\, c(g)\, {}^{g}\phi \qquad (g\in G).
\]
The set of equivalence classes of nonabelian $1$-cocycles is the pointed set $H^1(G,\Aut_{\Bbbk'}(Y'))$.

\begin{proposition}[Forms classified by \texorpdfstring{$H^1$}{H1}]\label{prop:forms-H1}
(See \cite[\S I.5]{SerreGC}, \cite[Prop.~4.4.4, Thm.~4.5.2]{Poonen17}.)
Let $\Bbbk'/\Bbbk$ be a finite Galois extension with group $G$ and let $Y'$ be a $\Bbbk'$-variety.
Isomorphism classes of $\Bbbk$-forms of $Y'$ are classified by the nonabelian cohomology set
$H^1(G,\Aut_{\Bbbk'}(Y'))$. Additionally, if an open set $U' \subset Y'$ is stable under the cocycle isomorphisms, 
then $U'$ descends to an open subset $U \subseteq Y$ with $U_{\Bbbk'} = U'$.
\end{proposition}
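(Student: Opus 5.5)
The plan is to reduce everything to Galois descent for quasi-projective varieties relative to the finite Galois extension $\Bbbk'/\Bbbk$. To make sense of ${}^{g}c$, fix a reference $\Bbbk$-form $Y_0$ with an identification $Y_{0,\Bbbk'}\simeq Y'$. Transporting the semilinear Galois action $1\times g$ on $Y_{0,\Bbbk'}=Y_0\times_\Bbbk \Spec\Bbbk'$ to $Y'$ gives semilinear automorphisms $\sigma_g$ of $Y'$. The group $G$ then acts on $\Aut_{\Bbbk'}(Y')$ by ${}^{g}\phi=\sigma_g\phi\sigma_g^{-1}$.

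\emph{Step 1 (form $\mapsto$ cocycle).} Given a form $(Y,\psi\colon Y_{\Bbbk'}\xrightarrow{\sim}Y')$, its Galois action gives semilinear automorphisms $\tau_g=\psi(1\times g)\psi^{-1}$ of $Y'$. Set $c(g)=\tau_g\sigma_g^{-1}$. This is $\Bbbk'$-linear, hence lies in $\Aut_{\Bbbk'}(Y')$. The identity $\tau_{gh}=\tau_g\tau_h$ then yields the cocycle relation $c(gh)=c(g)\,{}^{g}c(h)$. Changing $\psi$ by a $\Bbbk$-isomorphism of forms leaves $c$ unchanged. Changing the identification by some $\phi\in\Aut_{\Bbbk'}(Y')$ replaces $c$ by a cohomologous cocycle. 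So the map to $H^1(G,\Aut_{\Bbbk'}(Y'))$ is well defined.

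\emph{Step 2 (injectivity).} Suppose two forms $Y_1,Y_2$ give cohomologous cocycles. Then the conjugating element $\phi$ is a $\Bbbk'$-isomorphism $Y_{1,\Bbbk'}\to Y_{2,\Bbbk'}$ that commutes with the twisted Galois actions. Galois descent of morphisms shows that $\Bbbk$-morphisms are exactly the $G$-invariant $\Bbbk'$-morphisms. On affine opens this is $(A\otimes_\Bbbk\Bbbk')^G=A$, which is Hilbert 90 / faithfully flat descent for modules, and the affine pieces glue. Hence $\phi$ descends to a $\Bbbk$-isomorphism of forms.

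\emph{Step 3 (surjectivity, the main obstacle).} Given a cocycle $c$, define a twisted semilinear action $\tau_g=c(g)\sigma_g$ on $Y'$. The cocycle condition makes this a genuine action of $G$ covering the action on $\Spec\Bbbk'$. We need the quotient $Y=Y'/G$ to exist as a $\Bbbk$-variety with $Y_{\Bbbk'}\simeq Y'$, and effectiveness of descent can fail for non-quasi-projective schemes. I would use that the varieties in question are quasi-projective, so every finite set of points lies in an affine open. Intersecting the $G$-translates of such an affine open gives a cover of $Y'$ by $G$-stable affine opens $\Spec B$. On each of these, $Y$ is built as $\Spec B^G$, and the isomorphism $B^G\otimes_\Bbbk\Bbbk'\simeq B$ again follows from Galois descent for vector spaces. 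These affine quotients glue to $Y$, and the cocycle attached to $Y$ in Step 1 is $c$. This is exactly the argument cited from \cite[\S I.5]{SerreGC} and \cite[Thm.~4.5.2]{Poonen17}.

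\emph{Step 4 (open subsets).} Let $U'\subset Y'$ be an open subset stable under all the $\tau_g$. Let $\pi\colon Y_{\Bbbk'}\simeq Y'\to Y$ be the projection, which is finite, flat and surjective, hence open. Set $U=\pi(U')$, which is open in $Y$. Because $U'$ is $G$-stable and the fibres of $\pi$ are $G$-orbits, we get $\pi^{-1}(U)=U'$, and therefore $U_{\Bbbk'}=U'$. Equivalently, $U'$ with the restricted cocycle is itself a form, and Step 2 identifies it with an open subscheme of $Y$.
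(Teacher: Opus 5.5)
Your proposal is correct. It is the standard Galois-descent argument: the cocycle attached to a form, descent of $G$-equivariant morphisms for injectivity, a cover by $G$-stable affine opens for effectiveness, and openness of $Y_{\Bbbk'}\to Y$ with fibres the $G$-orbits for the open-subset clause. This is exactly what the paper delegates to Serre and Poonen without giving a proof of its own.

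You also correctly supply three things the paper's statement leaves implicit. First, a reference $\Bbbk$-form is needed to define the $G$-action on $\Aut_{\Bbbk'}(Y')$. Second, quasi-projectivity is needed for effectiveness in Step~3; it holds in all of the paper's applications. Third, forms must be compared by arbitrary $\Bbbk$-isomorphisms, since isomorphisms compatible with the chosen identifications would classify cocycles rather than cohomology classes.
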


I now return to my primary focus: point-counting over finite fields.
Let $X$ be a variety defined over $\mathbb{F}_q$ and fix $n\ge 1$.
The extension $\mathbb{F}_{q^n}/\mathbb{F}_q$ is Galois with cyclic group
\[
G=\Gal(\mathbb{F}_{q^n}/\mathbb{F}_q)\cong \mathbb{Z}/n\mathbb{Z},
\qquad
G=\langle \Fr_q\rangle,
\]
where $\Fr_q$ denotes the \emph{geometric Frobenius} on scalars, i.e.\ the inverse of the $q$-power map.
Equivalently, its restriction to $\mathbb{F}_{q^n}$ is
\[
\Fr_q(a)=a^{q^{\,n-1}}\qquad (a\in \mathbb{F}_{q^n}),
\]
so that $\Fr_q^{-1}(a)=a^q$ is the arithmetic Frobenius.

In the cyclic case, a $1$-cocycle $c\colon G\to \Aut_{\mathbb{F}_{q^n}}(X_{\mathbb{F}_{q^n}})$ is determined
by the image of $\Fr_q$
\[
\alpha:=c(\Fr_q)\in \Aut_{\mathbb{F}_{q^n}}(X_{\mathbb{F}_{q^n}}),
\]
and the cocycle condition is equivalent to the relation
\begin{equation}\label{eq:cocycle-cyclic}
1=c(\Fr_q^n)
= \alpha\cdot \Fr_q(\alpha)\cdot \Fr_q^2(\alpha)\cdots \Fr_q^{n-1}(\alpha)
\qquad \text{in }\Aut_{\mathbb{F}_{q^n}}(X_{\mathbb{F}_{q^n}}).
\end{equation}
Moreover, changing the cocycle by a coboundary amounts to \emph{Frobenius-twisted conjugacy}:
\begin{equation*}
\alpha \sim \alpha'
\quad\Longleftrightarrow\quad
\exists\, a\in \Aut_{\mathbb{F}_{q^n}}(X_{\F_{q^n}})\ \text{such that}\ 
\alpha' = a^{-1}\, \alpha\, \Fr_q(a).
\end{equation*}
Thus the $\mathbb{F}_q$-forms of $X_{\F_{q^n}}$ are in bijection with $\Fr_q$-twisted conjugacy classes
of elements $\alpha$ satisfying \eqref{eq:cocycle-cyclic}.

Fix such an element $\alpha\in \Aut_{\mathbb{F}_{q^n}}(X_{\F_{q^n}})$ representing a twisted conjugacy class.
Let $X_\alpha$ denote the corresponding $\mathbb{F}_q$-form of $X_{\mathbb{F}_{q^n}}$ (the ``twist by $\alpha$'').
By construction there is an isomorphism over $\mathbb{F}_{q^n}$,
\[
\iota_\alpha : (X_\alpha)_{\mathbb{F}_{q^n}} \xrightarrow{\ \sim\ } X_{\mathbb{F}_{q^n}},
\]
which then identifies their geometric compactly supported \'etale cohomology.

\begin{proposition}[Geometric Frobenius on a twisted form]\label{prop:Frob-twist}
Let $X_\alpha$ be the $\mathbb{F}_q$-form of $X_{\mathbb{F}_{q^n}}$ determined by the class of
$\alpha\in \Aut_{\mathbb{F}_{q^n}}(X_{\F_{q^n}})$.
Under the identification
\[
H^i_{c}\bigl((X_\alpha)_{\overline{\mathbb{F}}_q},\mathbb{Q}_\ell\bigr)
\ \xrightarrow[\ \iota_\alpha^*\ ]{\ \sim\ }\
H^i_{c}\bigl(X_{\overline{\mathbb{F}}_q},\mathbb{Q}_\ell\bigr),
\]
the action of geometric Frobenius satisfies
\[
\Fr_q^{(X_\alpha)}
=
\alpha^* \circ \Fr_q^{(X)},
\]
where $\alpha^*$ denotes the pullback action of the automorphism $\alpha$ on cohomology.
\end{proposition}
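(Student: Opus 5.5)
The plan is to make the descent datum explicit and compare the two Frobenius endomorphisms after base change to $\overline{\mathbb{F}}_q$. Write $\phi_\alpha$ for the Frobenius-semilinear endomorphism of $X_{\overline{\mathbb{F}}_q}=X\times_{\mathbb{F}_q}\overline{\mathbb{F}}_q$ that defines the geometric Frobenius of the form $X_\alpha$. The goal is to check the identity $\phi_\alpha=\phi\circ\alpha$ at the level of morphisms, where $\phi=\mathrm{id}_X\times\Fr_q$ is the geometric Frobenius of $X$ acting on the $\overline{\mathbb{F}}_q$ factor.

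\textbf{Step 1: write down the twisted Frobenius.} By Proposition~\ref{prop:forms-H1}, the form $X_\alpha$ is obtained by Galois descent of $X_{\mathbb{F}_{q^n}}$ along the twisted Galois action $\sigma\mapsto c(\sigma)\circ(\mathrm{id}\times\sigma)$. This action is determined by the generator: $\Fr_q$ acts by $\alpha\circ(\mathrm{id}\times \Fr_q)$. Equation \eqref{eq:cocycle-cyclic} is exactly what makes this a genuine action of $\Z/n\Z$. I will then extend scalars to $\overline{\mathbb{F}}_q$ and transport everything along $\iota_\alpha$. Under this identification, the action of $\Fr_q\in\Gal(\overline{\mathbb{F}}_q/\mathbb{F}_q)$ on $(X_\alpha)_{\overline{\mathbb{F}}_q}$ becomes $\alpha\circ(\mathrm{id}_X\times\Fr_q)$ on $X_{\overline{\mathbb{F}}_q}$, with $\alpha$ extended $\overline{\mathbb{F}}_q$-linearly. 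Equivalently, it becomes $(\mathrm{id}\times\Fr_q)\circ\Fr_q^{-1}(\alpha)$; this convention issue is what Step 2 must settle.

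\textbf{Step 2: pass to cohomology and fix conventions.} The Galois action on $H^i_c(X_{\overline{\mathbb{F}}_q},\mathbb{Q}_\ell)$ is by transport of structure along the automorphisms $\mathrm{id}\times\sigma$ of the scheme $X_{\overline{\mathbb{F}}_q}$. Applying functoriality of $H_c$ under composition of scheme automorphisms, Step 1 gives
\[
\Fr_q^{(X_\alpha)} = \bigl(\alpha\circ(\mathrm{id}\times\Fr_q)\bigr)^* = (\mathrm{id}\times\Fr_q)^*\circ\alpha^*,
\]
or its reverse composite, depending on whether the Galois action is taken via pullback or pushforward. The main obstacle I expect is this bookkeeping. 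One must check that $\alpha$, which is defined over $\mathbb{F}_{q^n}$ and not necessarily over $\mathbb{F}_q$, still gives a well-defined $\overline{\mathbb{F}}_q$-automorphism whose pullback commutes appropriately. One must also choose the left-versus-right convention in the cocycle relation $c(gh)=c(g)\,{}^g c(h)$ so that the order comes out as $\alpha^*\circ\Fr_q^{(X)}$ and not the reverse.

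To resolve the ordering, I will note that the two possible orders are conjugate: $\alpha^*\circ F$ and $F\circ\alpha^*$ are conjugate by $\alpha^*$. The statement is an identity of operators, though, so I will fix the convention that makes the cocycle relation compatible with pullback. With that convention, contravariance of $(\cdot)^*$ reverses the order of $\alpha\circ(\mathrm{id}\times\Fr_q)$ exactly once. Comparing with the definition of $\Fr_q^{(X)}=(\mathrm{id}\times\Fr_q)^*$ should then yield the stated formula $\Fr_q^{(X_\alpha)}=\alpha^*\circ\Fr_q^{(X)}$.

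As a sanity check I will use the trace formula (Theorem~\ref{thm:GL-trace}). Rational points of $X_\alpha$ correspond to points $x$ with $\alpha(\Fr(x))=x$, which agrees with the Lefschetz-type count for the twisted Frobenius.
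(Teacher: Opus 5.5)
Your route is the same as the paper's: its proof is the two-sentence version of your Steps 1--2 (twist the Galois action by the cocycle, transport along $\iota_\alpha$, apply $H_c$). The gap is in the only step with real content, the order of composition. You flag it as the main obstacle, then settle it by fiat, and your own computation points the other way. You set out to prove $\phi_\alpha=\phi\circ\alpha$, but Step 1 produces $\alpha\circ\phi$ (equivalently $\phi\circ{}^{\phi^{-1}}\!\alpha$), which is not $\phi\circ\alpha$ unless $\alpha$ is defined over $\F_q$. Contravariance applied to $\alpha\circ\phi$ gives $\phi^*\circ\alpha^*=\Fr_q^{(X)}\circ\alpha^*$, which is exactly your own display. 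So the closing claim that reversing the order ``exactly once'' yields $\alpha^*\circ\Fr_q^{(X)}$ does not follow. Switching to pushforwards does not rescue it either: that replaces $\alpha^*$ by $\alpha_*=(\alpha^{-1})^*$ rather than merely swapping the factors. Your trace-formula sanity check cannot decide the order, since $\alpha^*\circ F$ and $F\circ\alpha^*$ are conjugate by $\alpha^*$ and have equal traces.

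To prove an operator identity you must fix conventions and compute the transported $\overline{\F}_q$-linear Frobenius on points. Take the cocycle convention of Subsection~\ref{subsec:rat_forms}, $c(gh)=c(g)\,{}^g c(h)$, its twisted action $\sigma\star x=c(\sigma)(\sigma x)$ on $X(\overline{\F}_q)$, and $c(\Fr_q)=\alpha$ for the geometric $\Fr_q$. Then $c(\Fr_q^{-1})={}^{\Fr_q^{-1}}(\alpha^{-1})$, so the arithmetic Frobenius acts by $x\mapsto F(\alpha^{-1}x)$, where $F$ is the $q$-power Frobenius of $X$. The transported relative Frobenius of $X_\alpha$ is therefore $F\circ\alpha^{-1}$, which gives $(\alpha^{-1})^*\circ\Fr_q^{(X)}$. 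Normalizing $c$ on the arithmetic Frobenius instead gives $\alpha\circ F$ and hence $\Fr_q^{(X)}\circ\alpha^*$, which is the shape the paper itself writes in Corollary~\ref{cor:eq_Gal_modules}. So the precise formula depends on conventions that the paper's sketch also leaves implicit. What is convention-independent, and all that is used later, is this: when $\alpha^*$ is the identity on $H_c^*$ (as $\rho^*$ is, by Theorem~\ref{thm:main_thm_1.1}), every version gives $\Fr_q^{(X_\alpha)}=\Fr_q^{(X)}$ under $\iota_\alpha^*$. You should either carry out this computation and state the order and exponent it actually produces, or prove only that consequence, rather than choosing the convention after the fact.
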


\begin{proof}
The cocycle defining $X_\alpha$ twists the $\Gal(\overline{\mathbb{F}}_q/\mathbb{F}_q)$-action on
$(X_\alpha)_{\overline{\mathbb{F}}_q}$ so that, under $\iota_\alpha$, the action of $\Fr_q$ is modified by $\alpha$.
Passing to compactly supported $\ell$-adic cohomology gives
$\Fr_q^{(X_\alpha)}=\alpha^*\circ \Fr_q^{(X)}$.
\end{proof}

\subsection{Quotients, torsors and \texorpdfstring{$H^1$}{H1}-obstruction}\label{subsec:quotients_H1}
Let $\Bbbk$ be a field, $G$ be a geometrically reductive smooth affine group scheme over $\Bbbk$, and $X$ be a $\Bbbk$-scheme
equipped with a right $G$-action $a\colon X\times_\Bbbk G\to X$.

Let $(X/G)_{\mathrm{fpqc}}$ denote the fpqc sheafification of the presheaf $T \longmapsto X(T)/G(T)$.
In the affine case (i.e. $X=\Spec A$)
the invariant ring $A^G$ is finitely generated and the morphism
\[
\Spec A \longrightarrow \Spec(A^G)
\]
is the scheme-theoretic quotient \cite{MFK,GIT-Haboush}. In the case that $G$ acts freely, this agrees with the fpqc quotient $(X/G)_{\mathrm{fpqc}}$
\cite[Prop.~9.7.8]{Alper14}.
For the rest of this section I will assume a scheme $Y$ represents the quotient $(X/G)_{fpqc}$.

Assume also that the $G$-action is free.
Then the quotient map $\pi\colon X\to Y$ is a right
$G$-torsor. Fix an algebraic closure $\bar{\Bbbk}$ and write $\Gamma:=\Gal(\bar{\Bbbk}/\Bbbk)$.
Let $y\in Y(\Bbbk)$ and choose a lift
\[
x \in X_y(\bar{\Bbbk})\subset X(\bar{\Bbbk})
\qquad\text{with}\qquad \pi(x)=y.
\]
Because the $G$-action is free and transitive on geometric fibres, for each $\gamma\in\Gamma$ there exists a
\emph{unique} $c_\gamma\in G(\bar{\Bbbk})$ such that
$\gamma(x)=x\cdot c_\gamma.$
Then $c=(c_\gamma)_{\gamma\in\Gamma}$ is a $1$-cocycle:
for $\gamma,\delta\in\Gamma$
\[
(\gamma\delta)(x)
= \gamma(\delta(x))
= \gamma(x\cdot c_\delta)
= \gamma(x)\cdot \gamma(c_\delta)
= (x\cdot c_\gamma)\cdot \gamma(c_\delta)
= x\cdot\bigl(c_\gamma\,\gamma(c_\delta)\bigr),
\]
hence by uniqueness $c_{\gamma\delta}=c_\gamma\,\gamma(c_\delta)$.
Changing the lift $x$ to $x':=x\cdot g$ with $g\in G(\bar{\Bbbk})$ replaces $c$ by the cohomologous cocycle
$c'_\gamma=g^{-1}c_\gamma\gamma(g)$, so the class $[c]\in H^1(\Gamma,G)$ depends only on $y$.
Thus I obtain a well-defined map of pointed sets
\[
\delta\colon Y(\Bbbk)\longrightarrow H^1(\Gamma,G),
\qquad
y\longmapsto [X_y].
\]

The class $\delta(y)$ is precisely the obstruction to lifting $y$ to a $\Bbbk$-point of $X$:
indeed $\delta(y)=1$ if and only if $X_y$ is the trivial $G$-torsor which happens if and only if $X_y(\Bbbk)\neq\varnothing$ \cite[Proposition~5.12.14]{Poonen17}.

\begin{lemma}[obstruction to lifting]\label{lem:H1_lift_obstruction}
Assume $G$ acts freely on $X$ and that the fpqc quotient is represented by a $\Bbbk$-scheme $Y$.
Then there is an exact sequence of pointed sets
\[
1 \longrightarrow G(\Bbbk) \longrightarrow X(\Bbbk) \xrightarrow{\ \pi\ } Y(\Bbbk)
\xrightarrow{\ \delta\ } H^1(\Gamma,G).
\]
\end{lemma}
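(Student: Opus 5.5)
The plan is to verify exactness at each of the three spots $G(\Bbbk)$, $X(\Bbbk)$ and $Y(\Bbbk)$. Since these are pointed sets, "exactness" has to be read in the usual sense for a torsor sequence. On the left it means injectivity of the orbit map $g\mapsto x_0\cdot g$ once a point $x_0\in X(\Bbbk)$ is fixed. In the middle it means that the fibres of $\pi$ on $\Bbbk$-points are exactly the $G(\Bbbk)$-orbits. On the right it means $\pi(X(\Bbbk))=\delta^{-1}(1)$. I will state this interpretation at the outset, since $X(\Bbbk)$ carries no natural basepoint.

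\textbf{Left end.} Injectivity of $G(\Bbbk)\to X(\Bbbk)$, $g\mapsto x_0\cdot g$, is immediate from freeness. The action map $X\times G\to X\times X$ is a monomorphism, so $x_0\cdot g=x_0\cdot g'$ forces $g=g'$.

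\textbf{Middle.} If $x,x'\in X(\Bbbk)$ satisfy $\pi(x)=\pi(x')=y$, then both lie in the fibre $X_y$. This fibre is a $G$-torsor over $\Spec\Bbbk$, since $\pi$ is a $G$-torsor and torsors are stable under base change. Over $\bar\Bbbk$ there is therefore a unique $g\in G(\bar\Bbbk)$ with $x'=x\cdot g$. For $\gamma\in\Gamma$, applying $\gamma$ gives $x'=x\cdot\gamma(g)$, so $\gamma(g)=g$ by uniqueness. Hence $g\in G(\bar\Bbbk)^\Gamma=G(\Bbbk)$. When $\Bbbk$ is not perfect one should use $\Bbbk^{\mathrm{sep}}$ here; this is legitimate because $G$ is smooth, so $G(\Bbbk^{\mathrm{sep}})^\Gamma=G(\Bbbk)$. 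Conversely, $G$-orbits obviously lie in the fibres of $\pi$, because $\pi$ is $G$-invariant.

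\textbf{Right end.} I must show that $y\in\pi(X(\Bbbk))$ if and only if $\delta(y)$ is trivial. If $y=\pi(x)$ with $x$ rational, choose this $x$ as the lift. Then $\gamma(x)=x$, so $c_\gamma=1$ and $\delta(y)=1$. Conversely, suppose $\delta(y)=1$. Then $c_\gamma=g^{-1}\gamma(g)$ for some $g\in G(\bar\Bbbk)$, so the lift $x'=x\cdot g^{-1}$ satisfies
\[
\gamma(x')=x\cdot c_\gamma\,\gamma(g)^{-1}=x\cdot g^{-1}=x'.
\]
Thus $x'$ is a Galois-fixed $\bar\Bbbk$-point of the $\Bbbk$-scheme $X_y$, which is a variety. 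By Galois descent of points, $x'\in X_y(\Bbbk)$. This step is exactly the statement that a torsor with a trivial class is trivial, which the paper already cites from \cite[Proposition~5.12.14]{Poonen17}.

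\textbf{Main obstacle.} The only delicate point is the standing use of $\bar\Bbbk$ rather than $\Bbbk^{\mathrm{sep}}$. Since $G$ is smooth, $X_y$ is smooth over $\Bbbk$ and so has $\Bbbk^{\mathrm{sep}}$-points. Every lift and every cocycle can therefore be taken in $\Bbbk^{\mathrm{sep}}$, where fixed points of the Galois group are exactly the rational points. In the paper's applications $\Bbbk$ is finite, so this issue does not arise.
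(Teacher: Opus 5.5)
Your proposal is correct and follows essentially the same route as the paper. The paper builds $\delta$ from the cocycle $\gamma(x)=x\cdot c_\gamma$ attached to a geometric lift and obtains exactness at $Y(\Bbbk)$ by citing \cite[Proposition~5.12.14]{Poonen17}; you prove that step directly by twisting the lift by the coboundary and descending the resulting Galois-fixed point, and you also spell out exactness at $G(\Bbbk)$ and $X(\Bbbk)$ (left implicit in the paper) and correctly flag the need for $\Bbbk^{\mathrm{sep}}$ when $\Bbbk$ is imperfect.
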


In this paper, $X=\OPtd$ or $X=\OPtdtwist$. Both are affine, and our $G$ will be a connected torus acting freely on $X$ over $\F_q$. Over a finite field, the $H^1$ obstruction vanishes for connected groups by Lang's theorem:

\begin{theorem}[Lang's theorem\cite{Lang1956,Steinberg1968}]\label{thm:LangS}
For any finite field $\F_q$ and smooth connected algebraic group $G$, $H^1(\Gal(\overline{\F}_q/\F_q),G)$ is a single point.
\end{theorem}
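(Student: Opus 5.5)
The plan is the classical argument via the Lang map. I would first reduce the Galois cohomology statement to a surjectivity statement. The group $\Gamma=\Gal(\overline{\F}_q/\F_q)\cong\widehat{\Z}$ is topologically generated by $\Fr_q$, and $H^1$ means continuous cocycles with $G(\overline{\F}_q)$ discrete. A continuous cocycle $c$ is therefore determined by $a:=c(\Fr_q)\in G(\overline{\F}_q)$. Conversely, any $a$ gives a cocycle: $a$ lies in some $G(\F_{q^m})$, so the analogue of \eqref{eq:cocycle-cyclic} holds on a finite quotient. Such a cocycle is a coboundary if and only if $a=h\,\Fr_q(h)^{-1}$ for some $h\in G(\overline{\F}_q)$. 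Replacing $h$ by $h^{-1}$, it suffices to show that the Lang map
\[
L\colon G\to G,\qquad L(g)=g^{-1}F(g),
\]
is surjective on $\overline{\F}_q$-points. Here $F$ is the $q$-power Frobenius endomorphism of the $\F_q$-group $G$. On $G(\overline{\F}_q)$ it agrees with the Galois action of the (arithmetic) Frobenius; the choice of geometric versus arithmetic generator only replaces $a$ by an inverse, which is harmless.

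To prove surjectivity I would consider the twisted conjugation action of $G_{\overline{\F}_q}$ on itself, $g\ast x=g^{-1}xF(g)$. Note that $L(g)=g\ast 1$. The key input is that the differential of $F$ vanishes, since $F$ raises coordinates to the $q$-th power and $d(x^q)=0$. Hence for fixed $x$ the orbit map $g\mapsto g^{-1}xF(g)$ has differential at $1$ equal to $X\mapsto -X\cdot x$ after left-translating tangent spaces, which is an isomorphism. Because $G$ is smooth, this orbit map is therefore étale at the identity, and by homogeneity it is étale everywhere. In particular each orbit has dimension $\dim G$ and is open in $G$.

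Since $G$ is smooth and connected, it is irreducible, so any two nonempty open subsets intersect. Hence there is only one orbit, and in particular the orbit of $1$, which is the image of $L$, is all of $G(\overline{\F}_q)$. Combined with the first step, this shows every cocycle is cohomologous to the trivial one, so $H^1(\Gamma,G)$ is a point.

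The step needing the most care is the differential computation and the deduction that orbits are open. One has to make sure the orbit map is smooth, not merely injective on tangent spaces; this uses smoothness of $G$, so that tangent dimension equals $\dim G$, together with the miracle flatness / Jacobian criterion. The orbit of a closed point is then the image of an étale, hence open, morphism. I would verify the differential in the matrix case $G\subseteq\GL_N$ and note that the argument is intrinsic for general smooth group schemes.
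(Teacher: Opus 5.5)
Your proof is correct and is the classical argument of \cite{Lang1956}: because $dF=0$, each twisted-conjugation orbit map $g\mapsto g^{-1}xF(g)$ has bijective differential, so it is \'etale with open image, and irreducibility then forces a single orbit; the paper gives no proof of its own and simply cites \cite{Lang1956,Steinberg1968}, so there is nothing further to compare. The one detail worth stating explicitly is that the irreducibility you use is that of $G_{\overline{\F}_q}$. This holds because a connected group scheme over a field is geometrically connected (it has the rational point $1$), and a smooth connected group over $\overline{\F}_q$ is irreducible.
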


\begin{corollary}\label{cor:cor_of_LS}
Let $G$ be a connected smooth algebraic group over $\mathbb{F}_q$ and $X$ a variety with a free $G$-action.
Assume the fpqc sheaf quotient $X/G$ is represented by a $\mathbb{F}_q$-variety $Y$.
Then
\[
Y(\mathbb{F}_q)=X(\mathbb{F}_q)/G(\mathbb{F}_q).
\]
\end{corollary}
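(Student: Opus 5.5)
The plan is to combine Lemma~\ref{lem:H1_lift_obstruction} with Lang's theorem (Theorem~\ref{thm:LangS}). We need two things: every $\F_q$-point of $Y$ lifts to $X(\F_q)$, and the fibres of $\pi$ restricted to $\F_q$-points are exactly the $G(\F_q)$-orbits.

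\textbf{Surjectivity of $\pi$ on $\F_q$-points.} Let $y\in Y(\F_q)$. By the construction preceding Lemma~\ref{lem:H1_lift_obstruction}, the fibre $X_y$ is a $G$-torsor over $\Spec\F_q$. Its class $\delta(y)$ lies in $H^1(\Gal(\overline{\F}_q/\F_q),G)$, which is a single point by Theorem~\ref{thm:LangS}, since $G$ is smooth and connected. Hence $\delta(y)$ is the trivial class. By the characterization quoted from \cite[Proposition~5.12.14]{Poonen17}, this means $X_y(\F_q)\neq\varnothing$. Equivalently, exactness of the sequence in Lemma~\ref{lem:H1_lift_obstruction} at $Y(\F_q)$ gives $y\in\pi(X(\F_q))$.

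\textbf{Fibres of $\pi$ on $\F_q$-points.} Let $x,x'\in X(\F_q)$ with $\pi(x)=\pi(x')$.
\begin{itemize}
\item Over $\overline{\F}_q$, freeness and transitivity of $G$ on geometric fibres give a unique $g\in G(\overline{\F}_q)$ with $x'=x\cdot g$.
\item Applying any $\gamma\in\Gamma$ gives $x'=\gamma(x')=x\cdot\gamma(g)$.
\item Uniqueness of $g$ forces $\gamma(g)=g$ for all $\gamma$, so $g\in G(\F_q)$.
\end{itemize}
Conversely, $G(\F_q)$ clearly preserves the fibres of $\pi$. So $\pi$ induces a bijection $X(\F_q)/G(\F_q)\to Y(\F_q)$. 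The exactness at $X(\F_q)$ stated in Lemma~\ref{lem:H1_lift_obstruction} is the pointed-set shadow of this; the torsor argument above makes it hold fibrewise rather than just over the base point.

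\textbf{Main obstacle.} The only delicate point is justifying that $X_y$ really is a torsor over $\F_q$, so that $\delta(y)$ is defined. This requires the hypothesis that $Y$ represents the fpqc quotient, which makes $\pi$ a $G$-torsor, together with freeness. Both are given as hypotheses, so the rest is formal. As a consequence, $|Y(\F_q)|=|X(\F_q)|/|G(\F_q)|$, since $G(\F_q)$ acts freely on $X(\F_q)$.
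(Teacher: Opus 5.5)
Your proof is correct and follows the paper's route: Lang's theorem makes $\delta$ trivial, and the obstruction lemma then gives surjectivity of $X(\F_q)\to Y(\F_q)$. Your Galois-uniqueness argument that the fibres over $\F_q$-points are exactly the $G(\F_q)$-orbits fills in a step the paper leaves implicit. The paper instead cites its pointed-set exact sequence, which literally only controls the fibre over a base point.
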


\begin{proof}
By Lemma~\ref{lem:H1_lift_obstruction}, there is an exact sequence
\[
1 \to G(\F_q) \to X(\F_q) \to Y(\F_q) \xrightarrow{\delta} H^1(\Gal(\overline{\F}_q/\F_q),G).
\]
By Lang's theorem, $H^1(\Gal(\overline{\F}_q/\F_q),G)=1$ for connected $G$,
so $\delta$ is trivial and $Y(\F_q)=X(\F_q)/G(\F_q)$.
\end{proof}

Before I begin the main arguments, I will make a brief aside showing the smoothness of $X_{k,n}^\circ$ which is important in later applications of the Lefschetz fixed point Theorem~\ref{thm:lef-finiteorder}:

\begin{proof}[Proof of Lemma~\ref{lem:smoothness}]
Let
\[
\pi:\OPtd \longrightarrow \OPtd/T = X_{k,n}^\circ
\]
denote the quotient morphism. Since $\OPtd$ is an open subscheme of a smooth variety, it is smooth.

Since $T$ acts freely on $\OPtd$, the quotient map $\pi$ is a principal $T$--bundle. Concretely, there exists an fpqc covering $\{U_i \to X_{k,n}^\circ\}$ such that
\[
\OPtd\times_{X_{k,n}^\circ} U_i \;\cong\; U_i \times T
\]
over $U_i$, and under this identification the base change of $\pi$ is just the projection
\[
U_i\times T \longrightarrow U_i.
\]
Since $T$ is smooth, this projection is smooth, hence $\pi$ is smooth and surjective.

Finally, since smoothness descends over smooth surjective morphisms $X_{k,n}^\circ=\OPtd/T$ is smooth.
\end{proof}

\section{Main Theorems}\label{sec:main}
Let $\rho\in\GL_n(\mathbb{Z})$ be the cyclic permutation (rotation) matrix which cyclically permutes a fixed standard coordinate basis of $\mathbb{Z}^n$. Let $T$ be the split torus in $\PGL(n)_{\mathbb{F}_q}$ and $T'$ be an anisotropic torus in $\PGL(n)_{\mathbb{F}_q}$. Then 
\[
    T(\mathbb{F}_q) = \frac{\left(\F_q^\times\right)^n}{\F_q^\times}, \qquad T'(\mathbb{F}_q) = \frac{\F_{q^n}^\times}{\F_q^\times}.
\]
Choose $g \in \PGL(n)(\mathbb{F}_{q^n})$ such that $gTg^{-1} = T'$ and $g^{-1}\Fr(g)$ is the image of the cyclic rotation matrix $\rho$ in $\PGL(n)(\mathbb{F}_{q})$. This is possible since the $\Fr$-twisted conjugacy class corresponding to the rational form $T'$ of $T=\mathbb{G}_m^n/\mathbb{G}_m$ is the conjugacy class of $n$-cycles in the Weyl group $S_n$ (Frobenius acts trivially on the Weyl group of $T$). 

Recall the twisted top-dimensional open positroid variety and twisted positive configuration space I defined as
\begin{equation*}
    \OPtdtwist
    \;:=\;\Bigl\{V\in \Gr(k,n)\ \Big|\ \Delta_{I_r}(g^{-1}V)\neq 0\text{ for all }r\in\mathbb{Z}/n\mathbb{Z}\Bigr\} \qquad X_{k,n}^{\circ'}:= \OPtdtwist/T'.
\end{equation*}

The image of $\rho$ in $\Aut(\OPtd)$ is the $\Fr$-twisted conjugacy class corresponding to the rational form $\OPtdtwist$ of the top-dimensional open positroid variety $\OPtd$. Similarly, the image of $\rho$ in $\Aut(X_{k,n}^\circ)$ is the $\Fr$-twisted conjugacy class corresponding to the rational form $X_{k,n}^{\circ'}$ of the positive configuration space $X_{k,n}^\circ$. 

In subsection~\ref{subsec:thm1.1}, I will show the main theorem that $\rho$ acts trivially in cohomology and then in subsection~\ref{subsec:main_cor} I will show how this implies the main corollary explaining the point count formula \eqref{eq:point-count-top}. Throughout the following sections I will use $X_{k,n}^\circ$ and $\OPtd/T$ interchangeably and I will use $X_{k,n}^{\circ'}$ and $\OPtdtwist/T'$ interchangeably.

\subsection{Proof of Theorem 1.1}\label{subsec:thm1.1}

The cyclic rotation fixed points in the Grassmannian were considered in \cite{Karp19}:

\begin{proposition}{\cite[Theorem~1.1]{Karp19}}\label{prop:rot_fixed_points}
Let $\Bbbk$ be a field containing a primitive $n$-th root of unity $\zeta$. Then the $\rho$-fixed points of $\Gr(k,n)(\Bbbk)$ are precisely the $ \binom{n}{k}$ subspaces obtained as spans of $k$-element subsets of the $n$ one-dimensional $\rho$-eigenspaces spanned by the vectors
\[
v_i:= (1,\zeta^i,\zeta^{2i},\dots,\zeta^{(n-1)i})^\top,\qquad i=0,\dots,n-1.
\]
\end{proposition}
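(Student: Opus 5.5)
The plan is to reduce the statement to linear algebra: a $\Bbbk$-point $W\in\Gr(k,n)(\Bbbk)$ is fixed by $\rho$ exactly when $W\subseteq \Bbbk^n$ is a $\rho$-invariant $k$-dimensional subspace. So it suffices to classify the $\rho$-invariant subspaces, and I will do this by showing $\rho$ is diagonalizable over $\Bbbk$ with $n$ distinct eigenvalues.

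\emph{Eigenvectors.} Since $\rho(e_j)=e_{j+1}$, the $m$-th coordinate of $\rho v$ is $v_{m-1}$, with indices mod $n$. A direct computation then gives
\[
\rho v_i=\zeta^{-i}v_i .
\]
Because $\zeta$ is a primitive $n$-th root of unity, the eigenvalues $\zeta^{-i}$ for $i=0,\dots,n-1$ are pairwise distinct. Hence $v_0,\dots,v_{n-1}$ form a basis of $\Bbbk^n$ and each eigenspace $E_i=\Bbbk v_i$ is a line. As a cross-check, the matrix of the $v_i$ is a Vandermonde matrix in distinct nodes, so its determinant is nonzero. Equivalently, the minimal polynomial of $\rho$ is $x^n-1=\prod_i(x-\zeta^{-i})$, which is separable over $\Bbbk$. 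The existence of $\zeta$ forces $\operatorname{char}\Bbbk\nmid n$, which is where this hypothesis enters.

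\emph{Invariant subspaces.} Let $W$ be $\rho$-invariant. For each $i$, I use the Lagrange interpolation polynomial
\[
p_i(x)=\prod_{j\ne i}\frac{x-\zeta^{-j}}{\zeta^{-i}-\zeta^{-j}} .
\]
The operators $p_i(\rho)$ are the projections onto $E_i$ along the other eigenspaces, and they sum to the identity. Since each $p_i(\rho)$ is a polynomial in $\rho$, it preserves $W$. Therefore
\[
W=\bigoplus_i p_i(\rho)W=\bigoplus_i (W\cap E_i).
\]
Each $W\cap E_i$ is either $0$ or the line $E_i$, so $W$ is the span of $\{v_i : i\in S\}$ for a subset $S$ with $|S|=\dim W=k$.

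\emph{Converse and count.} Conversely, every such span is clearly $\rho$-invariant. Distinct subsets $S$ give distinct subspaces because the $v_i$ are linearly independent. This yields exactly $\binom{n}{k}$ fixed points.

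There is no serious obstacle. The only points needing care are checking that the eigenvalues are distinct, which needs $\zeta$ to be primitive, and justifying that a fixed point of the Grassmannian is the same as an invariant subspace, which is immediate from the functor of points on $\Bbbk$-points.
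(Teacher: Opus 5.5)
Your proof is correct and follows the same route as the paper: diagonalize $\rho$ with $n$ distinct eigenvalues, then show a $\rho$-stable subspace is a direct sum of eigenlines. Your Lagrange-interpolation projections $p_i(\rho)$ just make explicit the paper's appeal to semisimplicity of $\rho|_W$. (Your computation $\rho v_i=\zeta^{-i}v_i$ is the correct pairing under the convention $\rho(e_j)=e_{j+1}$; the paper's phrase ``eigenvalues $\zeta^i$'' describes the same set.)
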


\begin{proof}
$\rho$ is diagonalizable over $\Bbbk$ with $n$ distinct eigenvalues $\zeta^i$ ($0\le i<n$), and the eigenvectors may be taken as
\[
v_i=(1,\zeta^i,\zeta^{2i},\dots,\zeta^{(n-1)i})^\top .
\]
Note that $V=\bigoplus_{i=0}^{n-1} L_i$ where $L_i=\operatorname{span}\{v_i\}$.

Since each one dimensional subspace $L_i$ is $\rho$-stable, the $\binom{n}{k}$ subspaces spanned by size $k$ subsets of the $\{L_i\}$ are all $\rho$-stable. 

On the other hand, if $W\subset V$ is a $\rho$-stable $k$-dimensional subspace, then because $\rho$ acts semisimply on $V$, it acts semisimply on $W$. Thus $W$ decomposes as a direct sum of eigenspaces for $\rho$. So $W$ must be a direct sum of some of the eigenspaces $L_i$.
\end{proof}

\begin{corollary}\label{cor:rot_fixed}
    Let $\Bbbk$ be a field containing a primitive $n$-th root of unity $\zeta$. The $\rho$-fixed points of $\Gr(k,n)(\Bbbk)$ all lie in $\OPtd(\Bbbk)$.
\end{corollary}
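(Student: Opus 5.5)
By Proposition~\ref{prop:rot_fixed_points}, every $\rho$-fixed point of $\Gr(k,n)(\Bbbk)$ has the form $W_S=\operatorname{span}\{v_i : i\in S\}$ for some $k$-subset $S\subseteq\{0,\dots,n-1\}$. So it suffices to show that for every such $S$ and every $r\in\Z/n\Z$, the cyclic Pl\"ucker coordinate $\Delta_{I_r}(W_S)$ is nonzero.

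First I would write $W_S$ as the row span of the $k\times n$ matrix $M_S$ whose rows are $v_i^\top$ for $i\in S$. The entry in row $i$ and column $j$ (for $j=1,\dots,n$) is $\zeta^{(j-1)i}$. The Pl\"ucker coordinate $\Delta_{I_r}(W_S)$ is then, up to the choice of basis, the $k\times k$ minor of $M_S$ on the columns $r+1,\dots,r+k$, with indices taken mod $n$.

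Next I would compute this minor. Because $\zeta^n=1$, reducing column indices mod $n$ is harmless: column $j$ depends only on $j \bmod n$. So the minor is $\det\bigl(\zeta^{(r+m)i}\bigr)_{i\in S,\,0\le m\le k-1}$. Factoring $\zeta^{ri}$ out of each row gives
\[
\Delta_{I_r}(W_S)=\Bigl(\prod_{i\in S}\zeta^{ri}\Bigr)\det\bigl((\zeta^{i})^{m}\bigr)_{i\in S,\,0\le m\le k-1}.
\]
The remaining determinant is a Vandermonde determinant in the values $x_i=\zeta^i$ for $i\in S$, so it equals $\pm\prod_{i<i'\in S}(\zeta^{i'}-\zeta^{i})$.

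Finally, since $\zeta$ is a primitive $n$-th root of unity, the elements $\zeta^i$ for $0\le i<n$ are pairwise distinct. Hence every factor $\zeta^{i'}-\zeta^i$ is nonzero, and the prefactor $\prod_{i\in S}\zeta^{ri}$ is a unit. So $\Delta_{I_r}(W_S)\neq 0$ for all $r$, which means $W_S\in\OPtd(\Bbbk)$.

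The only step that needs care is the wrap-around of the cyclic interval $I_r$, where the columns appear in the order $r+1,\dots,n,1,\dots$. This is exactly where $\zeta^n=1$ is used, so that the columns still form consecutive powers. The reordering of columns only changes the sign of the minor, which does not affect nonvanishing. The hypothesis $\gcd(k,n)=1$ is not needed for this step.
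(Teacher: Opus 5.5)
Your proof is correct and follows the paper's argument: the paper also writes the fixed point as the row span of $v_{i_1},\dots,v_{i_k}$, factors $\zeta^{i_a(r-1)}$ out of each row of the consecutive minor, and is left with a Vandermonde determinant in the distinct values $\zeta^{i_a}$, which is nonzero. Your treatment of the wrap-around adds a detail the paper leaves implicit: you use $\zeta^n=1$ and note that reordering the columns only changes the sign of the minor.
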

\begin{proof}
Let $W$ be a $\rho$-fixed point of $\Gr(k,n)(\Bbbk)$. By the above Proposition, $W$ is represented by a $k$-by-$n$ matrix $(v_{i_1},v_{i_2}, \dots, v_{i_{k}})^\top$ where the $v_i$'s are as in the previous Proposition. Then the consecutive $k$-by-$k$ minor starting at column $r$ is the matrix whose $(a,b)$-entry is $\zeta^{i_a(r-1+b-1)}$. Rescaling each row by $\zeta^{i_a(r-1)}$ I get a Vandermonde matrix whose determinant is nonzero:
\begin{align*}
\det\left(\zeta^{i_a(r-1+b-1)}\right) &= \zeta^{(r-1)(\sum_{a=1}^{k} i_a)}\det\left(\zeta^{i_a(b-1)}\right) \\
&= \zeta^{(r-1)(\sum_{a=1}^{k} i_a)}\prod_{1 \leq a < b \leq k}\left(\zeta^{i_a}-\zeta^{i_b}\right) \neq 0.
\end{align*}
\end{proof}

After choosing a primitive $n$-th root of unity $\zeta$, define $V_I := \bigoplus_{i\in I} L_i$ for a subset $I \subset \{0,1,\dots,n-1\}$. Let $I+r := \{i+r \pmod{n} \mid i \in I\}$.

\begin{lemma}[Cyclic rotation fixed points in the quotient]\label{lem:rot_fixed_in_quotient}
    Let $\Bbbk$ be a field containing a primitive $n$-th root of unity $\zeta$. 
The $\rho$-fixed $\Bbbk$-points of the quotient $(\OPtd/T)(\Bbbk)$ are in natural bijection with cyclic rotation orbits of $k$-element subsets $I\subset\{0,\dots,n-1\}$. 
\end{lemma}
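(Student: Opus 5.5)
We would prove Lemma~\ref{lem:rot_fixed_in_quotient} by lifting fixed points of the quotient to $\OPtd$. A $\Bbbk$-point of $X_{k,n}^\circ$ fixed by $\rho$ lifts, by Corollary~\ref{cor:cor_of_LS} (or by Hilbert 90 for the split torus $T$ over any field), to some $W\in\OPtd(\Bbbk)$ with $\rho W = tW$ for a unique $t\in T(\Bbbk)$; uniqueness holds because the action is free. So the fixed points in the quotient correspond to $T(\Bbbk)$-orbits of pairs $(W,t)$ with $t^{-1}\rho W=W$. These are exactly the $W$ fixed by some element of the coset $T\rho\subset \PGL(n)$.

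The key step is the observation that every element $t\rho$ of the coset is $T(\Bbbk)$-conjugate to $\lambda\rho$ for a scalar, hence to $\rho$ in $\PGL(n)$. Write $\tilde t=(t_1,\dots,t_n)$. Since $\rho$ induces an $n$-cycle on the coordinates, the map $s\mapsto s\,\rho(s)^{-1}$ on $(\Bbbk^\times)^n$ has image the tuples with product $1$. Rescaling $\tilde t$ to have product $1$ requires an $n$-th root of $\prod t_i$. To avoid needing that root, we would work in $\PGL$: conjugating by $s$ changes $\tilde t\rho$ to $s\tilde t\rho(s)^{-1}\rho$. One can then make all coordinates but one equal to $1$, leaving $\operatorname{diag}(1,\dots,1,c)\rho$.

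If this matrix is only semisimple with eigenvalues the $n$-th roots of $c$ (not all in $\Bbbk$), fixed subspaces over $\Bbbk$ correspond to sums of Galois orbits of eigenlines. This is the main obstacle. We expect to handle it either by assuming $\Bbbk$ is algebraically closed (the case used in Theorem~\ref{thm:lef-finiteorder}), or by noting that Corollary~\ref{cor:rot_fixed} identifies the fixed subspaces. Over $\bar\Bbbk$ we may take $c=1$ after scaling, so each fixed point of the quotient is represented by $sV_I$ with $s\in T$, where $V_I$ is $\rho$-fixed by Proposition~\ref{prop:rot_fixed_points}. By Corollary~\ref{cor:rot_fixed}, $V_I$ lies in $\OPtd$, so the map $I\mapsto [V_I]$ from $k$-subsets to fixed points of the quotient is surjective.

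It remains to compute the fibers: when is $V_I$ in the same $T$-orbit as $V_J$? Let $\tau=\operatorname{diag}(1,\zeta,\dots,\zeta^{n-1})$. Then $\tau v_i=v_{i+1}$, so $\tau V_I=V_{I+1}$, and rotations of $I$ give the same point of the quotient. Conversely, suppose $sV_I=V_J$. Then $s$ conjugates the stabilizer data: $\rho$ and $s^{-1}\rho s$ both fix $V_I$. By freeness of the action, $s^{-1}\rho s\rho^{-1}\in T$ must be trivial in $\PGL$, so $s$ commutes with $\rho$ up to scalar. Hence $s_{i+1}/s_i$ is constant, equal to a root of unity $\zeta^r$, so $s=\tau^r$ up to scalar and $J=I+r$. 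Therefore the fixed points are in bijection with rotation orbits of $k$-subsets, and the bijection is natural once $\zeta$ is chosen.
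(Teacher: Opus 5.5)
Your proof is correct. Its first half matches the paper's: writing $t\rho$ as $s^{-1}\rho s$ modulo scalars with $s\in T(\bar\Bbbk)$ is exactly the paper's use of the map $s\mapsto\rho(s)s^{-1}$, followed by Proposition~\ref{prop:rot_fixed_points} and Corollary~\ref{cor:rot_fixed}.

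The rationality issue you call the main obstacle is not one. You should not fall back on assuming $\Bbbk$ algebraically closed, since that would weaken the lemma. Because $\zeta\in\Bbbk$, every $V_I$ is already a $\Bbbk$-point of $\OPtd$. Moreover $X_{k,n}^\circ(\Bbbk)\to X_{k,n}^\circ(\bar\Bbbk)$ is injective, so the equality $[W]=[V_I]$ you obtain over $\bar\Bbbk$ already holds in $X_{k,n}^\circ(\Bbbk)$. This is the paper's remark $\OPtd(\bar\Bbbk)^\rho=\OPtd(\Bbbk)^\rho$. In fact the feared case cannot occur: if $c\notin(\Bbbk^\times)^n$, the Galois orbits of eigenlines of $\operatorname{diag}(1,\dots,1,c)\rho$ all have a common size $d>1$ dividing $n$. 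Since $\gcd(k,n)=1$, no $k$-dimensional rational fixed subspace exists at all.

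Where you genuinely differ from the paper is the fiber computation. The paper uses Proposition~\ref{prop:T_action_on_rot_fixed}: any $t\in T$ acts on the eigenbasis in a circulant pattern $tv_i=\sum_m c_m(t)v_{i+m}$. A support argument with one $m$ having $c_m(t)\neq 0$ then forces $J=I+m$. This is elementary linear algebra and never uses freeness, so that step does not need $\gcd(k,n)=1$.

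You instead note that $\rho$ and $s^{-1}\rho s$ both fix $V_I$. Freeness at $V_I\in\OPtd$ then makes $s^{-1}\rho(s)$ scalar, i.e.\ $s$ centralizes $\rho$ in $\PGL(n)$, so $s$ lies in $\langle\tau\rangle$ modulo scalars. Your route is shorter and gives a sharper conclusion: the only torus elements carrying one $V_I$ to another are the $\tau^r$. The price is reliance on coprimality, which is a standing hypothesis for the quotient anyway.
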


To prove the lemma I will need a proposition on how $T(\Bbbk)$ acts on the $\rho$-eigenvectors:
\begin{proposition}\label{prop:T_action_on_rot_fixed}
    Let $\Bbbk$ be a field containing a primitive $n$-th root of unity $\zeta$ and let $v_i$ be as in Proposition~\ref{prop:rot_fixed_points}.
Let $t \in T(\Bbbk)$, then there exist coefficients $c_m(t)\in \Bbbk$ such that
\[
    t v_i \;=\; \sum_{m=0}^{n-1} c_m(t)\, v_{i+m}
\]
where $i+m$ is interpreted modulo $n$.
\end{proposition}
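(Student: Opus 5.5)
The plan is a direct computation: lift $t$ to the diagonal matrix $\widetilde t=\operatorname{diag}(t_1,\dots,t_n)$ and expand $\widetilde t v_i$ in the eigenbasis $\{v_j\}$. The eigenvector matrix $(\zeta^{(a-1)j})_{a,j}$ is a Vandermonde (discrete Fourier) matrix with distinct nodes $\zeta^j$, so it is invertible. Hence $\{v_0,\dots,v_{n-1}\}$ is a basis of $V$ and some expansion always exists. The content of the proposition is that the coefficients depend only on the \emph{shift} $m=j-i$ and not on $i$ itself.

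To show this, write the $a$-th coordinate of $\widetilde t v_i$ as $t_a\zeta^{(a-1)i}$. Discrete Fourier inversion expresses the vector of diagonal entries as $(t_1,\dots,t_n)^\top=\sum_{m} c_m v_m$, where
\[
c_m = \frac{1}{n}\sum_{a=1}^n t_a\zeta^{-(a-1)m}.
\]
This uses that $n$ is invertible in $\Bbbk$, which holds because $\Bbbk$ contains a primitive $n$-th root of unity. Equivalently, $t_a=\sum_m c_m\zeta^{(a-1)m}$ for each $a$.

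Multiplying coordinatewise gives
\[
t_a\zeta^{(a-1)i}=\sum_m c_m\zeta^{(a-1)(i+m)},
\]
which says exactly that $\widetilde t v_i=\sum_m c_m v_{i+m}$, with indices taken mod $n$ because $\zeta^n=1$. The key observation is that coordinatewise multiplication satisfies $v_i\cdot v_m=v_{i+m}$, so the $v_j$ form a group basis under the Hadamard product.

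It remains to handle the projective ambiguity. A different lift of $t$ is $\lambda\widetilde t$ for some $\lambda\in\Bbbk^\times$, and this rescales all the $c_m$ by $\lambda$. Since $c_m(t)$ is only needed up to this common scalar (the action on $\Gr(k,n)$ does not see it), I will fix a lift and define $c_m(t)$ from it.

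There is no real obstacle. The only points needing care are the invertibility of $n$ in $\Bbbk$ for Fourier inversion and the bookkeeping of indices mod $n$.
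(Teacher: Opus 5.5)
Your proof is correct and is essentially the paper's argument. Your Hadamard identity $v_i\cdot v_m=v_{i+m}$ is the statement $s^i v_m=v_{i+m}$ for $s=\operatorname{diag}(1,\zeta,\dots,\zeta^{n-1})$, and writing $\widetilde t v_i$ as the coordinatewise product $v_i\cdot(\widetilde t v_0)$ is exactly the paper's commutation $t s^i v_0=s^i t v_0$ of diagonal matrices; the explicit Fourier formula for $c_m$ (and with it the appeal to $n\in\Bbbk^\times$) is extra detail the paper avoids, since expanding $tv_0$ only needs $\{v_j\}$ to be a basis.
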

\begin{proof}
Let $s = \operatorname{diag}(1,\zeta,\zeta^2, \dots, \zeta^{n-1})$. Then $sv_i = v_{i+1}$ (with the indices interpreted modulo $n$) and $s$ commutes with $t$. Thus letting $c_m(t)$ be the coefficient of $v_m$ in $tv_{0}$, I get
\[
    tv_i = ts^iv_0 = s^itv_0 = s^i\sum_{m=0}^{n-1} c_m(t)v_{m} = \sum_{m=0}^{n-1} c_m(t)v_{i+m}
\]
where $i+m$ is interpreted modulo $n$.
\end{proof}

\begin{proof}[Proof of Lemma~\ref{lem:rot_fixed_in_quotient}]
This proof will be in two steps: 
\begin{enumerate}
\item Every such $\rho$-fixed point in $(\OPtd/T)(\Bbbk)$ lifts to a $\rho$-fixed point in $\OPtd(\Bbbk)$
\item the $T$-orbit of a $\rho$-fixed point $V_I \subseteq \OPtd(\Bbbk)$, intersected with the $\rho$-fixed points, consists exactly of the $\rho$-fixed points achieved by rotation of $I$.
\end{enumerate}
(1, Lifting fixed points). Suppose $[W]\in (\OPtd/T)(\Bbbk)$ satisfies $\rho\cdot[W]=[W]$ for $W$ a $k$-dimensional linear subspace of $\Bbbk^n$. Then there exists $t\in T(\Bbbk)$ with
\[
\rho\cdot W = t\cdot W.
\]
Define the map
\[
\varphi: T\longrightarrow T\qquad s\mapsto \rho(s) s^{-1} \pmod{\text{scalars}},
\]
where $\rho(s) := \rho s\rho^{-1}$ corresponds to the cyclic rotation of the diagonal entries of an element in $T$. Since the $n$-by-$n$ matrix $(\delta_{i,j} - \delta_{i,j-1 \bmod n})$ has rank $n-1$ with columns orthogonal to the all ones vector, I can choose $s\in T(\overline{\Bbbk})$ with 
\[
\rho(s) s^{-1} \equiv t^{-1}\pmod{\text{scalars}}.
\]
Then
\[
\rho\cdot (s\cdot W) = \rho (s)\cdot (\rho\cdot W) = (\rho s) \cdot (t \cdot W) = \big((\rho s)s^{-1} t\big)\cdot (s\cdot W),
\]
and by our choice of $s$ the diagonal $(\rho s)s^{-1} t$ is scalar; hence $\rho\cdot(s\cdot W)=s\cdot W$ as a point of the Grassmannian. Thus every $\rho$-fixed point of the quotient lifts to a $\rho$-fixed point of $\OPtd(\overline{\Bbbk})$. Finally by Proposition~\ref{prop:rot_fixed_points}, $\OPtd(\overline{\Bbbk})^\rho = \OPtd(\Bbbk)^\rho$.

(2, $T$-orbit intersect with $\rho$-fixed points).
By corollary \ref{cor:rot_fixed} 
the $\rho$-fixed points in $\OPtd$ are of the form $V_I := \bigoplus_{i\in I} L_i$ for $I$ a subset of $\{0,1,\dots,n-1\}$ of size $k$.
Now suppose that $t\cdot V_I = V_J$ for some $t \in T(\Bbbk)$. Choose some $m$ such that $c_m(t) \neq 0$ where $c_m(t)$ are as in Proposition \ref{prop:T_action_on_rot_fixed}. By Proposition \ref{prop:T_action_on_rot_fixed}, for each $i \in I$, the $v_{i+m}$ coefficient of $tv_i$ in the $\rho$-eigenbasis $\{v_0,v_1,v_2, \dots, v_{n-1}\}$ of $V$ is nonzero. Since $tv_i \in tV_I = V_J$, I have $i+m \in J$. Since $I$ and $J$ are both size $k$, this implies $I+m = J$. Conversely, if $t$ is one of the following $n$ matrices:
\[
     t_m := \operatorname{diag}(1,\zeta^m,\zeta^{2m}, \dots, \zeta^{(n-1)m}) \textnormal{ for $0 \leq m < n$}.
\]
Then $t_mV_I = V_{I+m}$.
\end{proof}
From the above counts, the Lefschetz trace theorem, and Theorem~\ref{thm:cohom-dim} I can now show that rotation acts as the identity on $H^i_{c}((X_{k,n}^\circ)_{\Bbbk},\mathbb{Q}_{\ell})$:
\begin{proof}[Proof of Theorem~\ref{thm:main_thm_1.1}]
By the Lefschetz fixed-point formula for finite-order automorphisms \ref{thm:lef-finiteorder},
\[
\sum_i (-1)^i \operatorname{Tr}\big(\rho, H^i_{c}((X_{k,n}^\circ)_{\Bbbk};\mathbb{Q}_{\ell})\big) \;=\; \#(X_{k,n}^\circ)(\Bbbk)^\rho.
\]
Since the characteristic of $\Bbbk$ is coprime to $n$ and $\Bbbk$ is algebraically closed, $\Bbbk$ contains a primitive $n$-th root of unity. From the previous Lemma the right-hand side equals $\frac{1}{n}\binom{n}{k}$. By Theorem~\ref{thm:cohom-dim} the total Betti dimension equals the same number
\[
\sum_i \dim H_c^i((X^\circ_{k,n})_{\Bbbk},\mathbb{Q}_\ell) = \frac{1}{n}\binom{n}{k}
\]
and all the cohomology is even. Since $\rho$ is order $n$ and $\overline{\mathbb{Q}_{\ell}}$ contains the $n$-th roots of unity, $\rho$ acts semisimply with eigenvalues $\zeta^i$. Thus 
\begin{align}\label{eq:sum_roots_of_unity}
\frac{1}{n}\binom{n}{k} =\sum_i (-1)^i \operatorname{Tr}\big(\rho, H^i_{c}((X^\circ_{k,n})_{\Bbbk},\mathbb{Q}_\ell)\big) = \sum_i m_i\zeta^i
\end{align}
where $m_i$ is the multiplicity of the $\zeta^i$-eigenspace in $\bigoplus_i H_c^i((X^\circ_{k,n})_{\Bbbk},\mathbb{Q}_\ell)$. From the total dimension of $H_c^*((X^\circ_{k,n})_{\Bbbk},\mathbb{Q}_\ell)$ equaling the sum of the multiplicities of the eigenspaces, I also get
\begin{align}\label{eq:sum_mult_of_eigenval}
\sum m_i = \frac{1}{n}\binom{n}{k}.
\end{align}
After choosing any embedding $\overline{\mathbb{Q}_{\ell}} \to \mathbb{C}$, using \eqref{eq:sum_mult_of_eigenval} the triangle inequality implies
\[
|\sum_i m_i\zeta^i|_{\mathbb{C}} \leq \sum_i m_i|\zeta^i|_{\mathbb{C}} = \frac{1}{n}\binom{n}{k}
\]
with equality if all $m_i\zeta^i$ have the same argument. Thus \eqref{eq:sum_roots_of_unity} implies that all the eigenspaces $m_i$ are zero except for one, which must have real positive argument. I conclude $m_0 = \frac{1}{n}\binom{n}{k}$ and $m_i = 0$ for $i \neq 0$.

Since $\rho$ is of finite order, it acts semisimply on $H_c^*((X^\circ_{k,n})_{\Bbbk},\mathbb{Q}_\ell)$. Since all of $\rho$'s eigenvalues are $1$ and $\rho$ acts semisimply, it acts trivially on $H_c^*((X^\circ_{k,n})_{\Bbbk},\mathbb{Q}_\ell)$.
\end{proof}

\begin{remark}
It would be interesting to investigate whether analogues of Theorem~\ref{thm:main_thm_1.1} continue to hold for suitable automorphisms of the top-dimensional projected Richardson variety in a maximal partial flag variety in other Lie types.
\end{remark}

\subsection{Proof of Theorem 1.2}\label{subsec:main_cor}

Using the discussion at the beginning of section~\ref{sec:main}, I now translate the cohomological triviality of the rotation into an equality of point-counts for two different $\F_q$-rational forms that become isomorphic over $\F_{q^n}$.

\begin{corollary}\label{cor:eq_Gal_modules}
    Let $k$ and $n$ be coprime and let $q$ be coprime to $n$. When $\ell \nmid n$, $H_c^*((X_{k,n}^\circ)_{\overline{\mathbb{F}_q}},\mathbb{Q}_{\ell})$ and $H_c^*((X_{k,n}^{\circ'})_{\overline{\mathbb{F}_q}},\mathbb{Q}_{\ell})$ are isomorphic as $\mathbb{Q}_{\ell}[\Fr_q]$-modules.
\end{corollary}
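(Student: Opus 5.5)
The plan is to realize $X_{k,n}^{\circ'}$ as the twisted form $(X_{k,n}^\circ)_\alpha$ with $\alpha$ the image of $\rho$ in $\Aut(X_{k,n}^\circ)$, as explained at the start of section~\ref{sec:main}. Then Proposition~\ref{prop:Frob-twist} identifies the Frobenius on the twist with $\rho^*\circ\Fr_q$, and Theorem~\ref{thm:main_thm_1.1} shows that $\rho^*$ is the identity.

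\textbf{Step 1: the twisted form.} First I make the identification precise. Translation $m_g\colon V\mapsto gV$ gives an $\F_{q^n}$-isomorphism $\OPtd\to\OPtdtwist$. Since $gTg^{-1}=T'$, it intertwines the $T$- and $T'$-actions and therefore descends to an isomorphism $\iota\colon (X_{k,n}^\circ)_{\F_{q^n}}\to (X_{k,n}^{\circ'})_{\F_{q^n}}$.

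Comparing the two $\F_q$-structures, I compute $\Fr_q\circ m_g\circ\Fr_q^{-1}=m_{\Fr_q(g)}=m_g\circ m_\rho$, using $g^{-1}\Fr_q(g)=\rho$. So the descent datum on $X_{k,n}^{\circ'}$ pulled back via $\iota$ is the standard one composed with $\rho$. This is a cocycle: because $\rho$ is $\F_q$-rational, the norm in \eqref{eq:cocycle-cyclic} is $\rho^n=1$, which holds in $\PGL_n$. Here I must check that $\rho$ preserves $\OPtd$ and normalizes $T$, so that $\rho$ induces a well-defined automorphism of the quotient $X_{k,n}^\circ$. Both facts are stated in the introduction. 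Hence $X_{k,n}^{\circ'}\cong (X_{k,n}^\circ)_\alpha$ with $\alpha=\bar\rho$.

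\textbf{Step 2: the Frobenius action.} By Proposition~\ref{prop:Frob-twist}, under $\iota^*$ we have $\Fr_q^{(X')}=\rho^*\circ\Fr_q^{(X)}$ on $H_c^i((X_{k,n}^\circ)_{\overline{\F}_q},\mathbb{Q}_\ell)$.

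\textbf{Step 3: apply the main theorem.} I apply Theorem~\ref{thm:main_thm_1.1} with $\Bbbk=\overline{\F}_q$. Its hypotheses hold: $\gcd(k,n)=1$, $\ell\ne p$, and $p\nmid n$ because $q$ is coprime to $n$. The theorem gives $\rho^*=\mathrm{id}$, so $\Fr_q^{(X')}=\Fr_q^{(X)}$ under $\iota^*$. Thus $\iota^*$ is a $\mathbb{Q}_\ell[\Fr_q]$-module isomorphism in every degree.

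I record one subtlety about $\ell\nmid n$. The theorem's proof extends scalars to $\overline{\mathbb{Q}_\ell}$ only to diagonalize $\rho$. Triviality of $\rho^*$ over $\overline{\mathbb{Q}_\ell}$ then descends to $\mathbb{Q}_\ell$, so this hypothesis is harmless.

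\textbf{Main obstacle.} I expect Step 1 to be the hardest: getting the directions right. One has to pin down whether the cocycle is $\rho$ or $\rho^{-1}$, and whether the relevant quantity is $g^{-1}\Fr(g)$ or $\Fr(g)g^{-1}$, given the geometric versus arithmetic Frobenius conventions. One also has to confirm that the descent of $\iota$ to the torus quotients respects the twisted Galois actions. Since $\rho^{\pm1}$ both act trivially on cohomology, any sign or inversion ambiguity does not affect the conclusion. So I would handle this step by verifying on the level of $\OPtd$ and then passing to quotients via the universal property of the fpqc quotient.
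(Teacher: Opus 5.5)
Your proposal is correct and follows essentially the same route as the paper. Both use $\Fr_q(g)=g\rho$ to transport the Frobenius of $X_{k,n}^{\circ'}$ back along $m_g$, where it becomes the Frobenius of $X_{k,n}^\circ$ composed with $\rho^*$, and then kill $\rho^*$ with Theorem~\ref{thm:main_thm_1.1} over $\overline{\F}_q$; your extra checks (that $m_g$ descends to the torus quotients, the cocycle condition $\rho^n=1$, and that the order of composing $\rho^*$ with $\Fr_q$ does not matter) spell out what the paper's one-line relation $\Fr\circ g=g\circ\rho\circ\Fr$ leaves implicit.
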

\begin{proof}
Let $m_g$ be the isomorphism between $(X_{k,n}^\circ)_{\mathbb{F}_{q^n}}$ and $(X_{k,n}^{\circ'})_{\mathbb{F}_{q^n}}$. Then from the equality
\[
    \Fr \circ g = g \circ \rho \circ \Fr,
\]
I get on the compactly supported \'etale cohomology that
\[
\left(\Fr^{X_{k,n}^{\circ'}}\right)^* = m_{g^{-1}}^* \circ \left(\Fr^{X_{k,n}^\circ}\right)^* \circ \rho^* \circ m^*_g.
\]
By Theorem~\ref{thm:main_thm_1.1}, $\rho$ acts trivially on the cohomology, hence $m_{g}^*$ is the desired isomorphism of $\mathbb{Q}_{\ell}[\Fr]$-modules.
 \end{proof}

\begin{corollary}\label{cor:eq_point_counts}
    Let $k$ and $n$ be coprime and let $q$ be coprime to $n$. I obtain an equality of point counts
\[
\big|\big(X_{k,n}^\circ\big)(\F_q)\big|
\;=\;
\big|\big(X_{k,n}^{\circ'}\big)(\F_q)\big|
\]
\end{corollary}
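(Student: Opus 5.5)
The plan is to deduce the equality from Corollary~\ref{cor:eq_Gal_modules} through the Grothendieck--Lefschetz trace formula (Theorem~\ref{thm:GL-trace}). Both $X_{k,n}^\circ$ and $X_{k,n}^{\circ'}$ are separated schemes of finite type over $\F_q$. For $X_{k,n}^\circ$ this holds because it is the spectrum of the finitely generated invariant ring $\F_q[\OPtd]^T$. For $X_{k,n}^{\circ'}$ it holds because it is the $\F_q$-form constructed via Proposition~\ref{prop:forms-H1} from the cocycle determined by $\rho$. That cocycle satisfies the cyclic cocycle condition \eqref{eq:cocycle-cyclic}, since $\rho^n=1$ and $\rho$ is defined over $\F_q$, so $\Fr_q(\rho)=\rho$. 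Its base change to $\F_{q^n}$ is affine, so it is an affine $\F_q$-variety by descent.

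Applying Theorem~\ref{thm:GL-trace} to each variety with a fixed prime $\ell$ distinct from $p$ and not dividing $n$ gives
\begin{align*}
\big|X_{k,n}^\circ(\F_q)\big| &= \sum_i (-1)^i \Tr\big(\Fr_q \mid H^i_c((X_{k,n}^\circ)_{\overline{\F}_q},\mathbb{Q}_\ell)\big),\\
\big|X_{k,n}^{\circ'}(\F_q)\big| &= \sum_i (-1)^i \Tr\big(\Fr_q \mid H^i_c((X_{k,n}^{\circ'})_{\overline{\F}_q},\mathbb{Q}_\ell)\big).
\end{align*}
Corollary~\ref{cor:eq_Gal_modules} provides an isomorphism $m_g^*$ of $\mathbb{Q}_\ell[\Fr_q]$-modules between the two cohomologies. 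Because it comes from the variety isomorphism $m_g$, it preserves cohomological degree, so it is an isomorphism in each degree $i$. Traces of $\Fr_q$ therefore agree degree by degree, and the two alternating sums coincide.

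The main point needing care is checking the hypotheses of Corollary~\ref{cor:eq_Gal_modules}. We need $\ell\nmid n$, which we may arrange freely. We also need $\gcd(q,n)=1$, which is given and which ensures that $\overline{\F}_q$ contains a primitive $n$-th root of unity. That root of unity is what Theorem~\ref{thm:main_thm_1.1} needs over $\Bbbk=\overline{\F}_q$.

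We must also confirm that the isomorphism is degree-preserving rather than merely an isomorphism of total cohomology. Even if only the total cohomology were identified, Theorem~\ref{thm:cohom-dim} places all cohomology in even degrees, so the alternating sum reduces to the total trace and the conclusion still holds.
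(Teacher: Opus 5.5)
Your proposal is correct and follows the paper's proof: apply the Grothendieck--Lefschetz trace formula to both varieties, then use the $\mathbb{Q}_\ell[\Fr_q]$-module isomorphism $m_g^*$ from Corollary~\ref{cor:eq_Gal_modules} to identify the traces of $\Fr_q$. Your extra checks (the twisted form is separated of finite type, $m_g^*$ preserves degree, and $\ell\nmid n$, $p\nmid n$) are details the paper leaves implicit, and the fallback via even-degree concentration is harmless but unnecessary.
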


\begin{proof}
By the Grothendieck--Lefschetz trace formula,
\[
|X(\F_q)|=\sum_i (-1)^i\operatorname{Tr}\big(\mathrm{Frob}_q \:|\: H^i_c(X\otimes\overline{\F_q},\overline{\mathbb{Q}}_\ell)\big).
\]
This combined with corollary \ref{cor:eq_Gal_modules} implies $\big|\big(X_{k,n}^\circ\big)(\F_q)\big|\;=\;\big|\big(X_{k,n}^{\circ'}\big)(\F_q)\big|$
\end{proof}

There is one more technical tool needed to prove the second equality of Theorem~\ref{thm:main_thm_1.2}:

\begin{lemma}\label{lem:moore_det_consequence}
Let $M$ be a $k$-by-$k$ matrix of the form 
\[
M_{i,j} = v_i^{q^{j-1}}, \qquad v = (v_1,v_2, \dots, v_k) \in \mathbb{F}_{q^n}^k.
\]
Then $\mathrm{det}(M) = 0$ if and only if $v_1,v_2, \dots, v_k$ are linearly dependent over $\mathbb{F}_q$.
\end{lemma}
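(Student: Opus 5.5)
The plan is to prove the two directions separately, using only that $x\mapsto x^q$ is $\F_q$-linear on $\F_{q^n}$ (the Frobenius).

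\emph{Dependence implies vanishing.} Suppose $\sum_i a_i v_i=0$ with $a_i\in\F_q$ not all zero. Raising to the $q^{j-1}$ power and using $a_i^{q}=a_i$ gives $\sum_i a_i v_i^{q^{j-1}}=0$ for every $j$. So $a^\top M=0$, i.e.\ the nonzero vector $a$ is a left null vector of $M$, and $\det(M)=0$.

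\emph{Vanishing implies dependence.} I would argue by contrapositive, via polynomial degree. Suppose $v_1,\dots,v_k$ are $\F_q$-linearly independent but $\det M=0$. Then some nonzero $c=(c_1,\dots,c_k)\in\overline{\F_q}^{\,k}$ satisfies $Mc=0$, and we may take $c\in\F_{q^n}^k$ since $M$ has entries there. Consider the $q$-polynomial
\[
L(x)=\sum_{j=1}^{k} c_j x^{q^{j-1}}.
\]
The condition $Mc=0$ says exactly that $L(v_i)=0$ for all $i$. Because $L$ is $\F_q$-linear, it then vanishes on the whole $\F_q$-span of the $v_i$. By independence this span has $q^k$ elements.

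On the other hand, $L$ is a nonzero polynomial of degree at most $q^{k-1}$, so it has at most $q^{k-1}<q^k$ roots. This contradiction forces $\det M\neq 0$.

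The only point needing care is the roles of rows and columns. Since $M_{i,j}=v_i^{q^{j-1}}$, the right kernel of $M$ yields the polynomial $L$, while the left kernel yields the linear relation among the $v_i$. Once this is set up correctly, both directions are immediate, and there is no real obstacle. Alternatively, one could cite the Moore determinant formula
\[
\det M=\prod_{i}\ \prod_{(c_1,\dots,c_{i-1})\in\F_q^{i-1}}\bigl(v_i+c_1v_1+\cdots+c_{i-1}v_{i-1}\bigr)
\]
(up to sign and ordering conventions), which gives both directions at once. The self-contained root-counting argument avoids having to verify that formula.
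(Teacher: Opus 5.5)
Your proof is correct, and it takes a different route from the paper's. The paper quotes Moore's product formula $\det(M)=\prod_{i=1}^{k}\prod_{c\in\F_q^{i-1}}(c_1v_1+\cdots+c_{i-1}v_{i-1}+v_i)$ and reads off both directions from the vanishing of a single linear factor. For ``dependent $\Rightarrow$ singular'', one normalizes a nontrivial relation at its largest index with nonzero coefficient, and this produces a factor equal to zero.

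You instead prove the two directions separately and from scratch. An $\F_q$-relation is a left null vector because Frobenius is additive and fixes $\F_q$. A right null vector $c$ yields the nonzero $q$-polynomial $L(x)=\sum_j c_j x^{q^{j-1}}$, of degree at most $q^{k-1}$. Since $L$ is $\F_q$-linear, it would have to vanish on the $q^k$-element span of independent $v_i$, which is impossible. You also set up the rows and columns correctly: the left kernel gives the relation and the right kernel gives the polynomial.

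Your route is self-contained. It needs only the root bound for a nonzero polynomial, and it avoids checking the indexing and sign conventions of Moore's identity. The standard proof of that identity is itself a divisibility-plus-degree argument of the same kind. The paper's route gains brevity and an explicit factorization of $\det(M)$, which is more information than the lemma actually requires.
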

\begin{proof}
The determinant of such a matrix is a Moore determinant \cite{Moore1896} which has a product formula
\[
\det(M) = \prod_{1 \leq i \leq k}\ \prod_{c_1,c_{2}, \dots, c_{i-1} \in \mathbb{F}_q} \left(c_1v_1 + c_2v_2 +\dots+c_{i-1}v_{i-1} + v_i\right)
\]
This vanishes if and only if one of the terms in the product is zero, which happens if and only if there is an $\mathbb{F}_q$ relation between $v_1,v_2, \dots, v_k$.
\end{proof}

\begin{remark}
The same statement holds for general extensions $L/K$ with cyclic Galois group. See \cite[Lemma~3.3]{GXY18} for a proof that applies in the general case.
\end{remark}
\begin{proof}[Proof of Theorem~\ref{thm:main_thm_1.2}]
Applying corollary \ref{cor:cor_of_LS} to corollary \ref{cor:eq_point_counts}, I get
\begin{align*}
    \frac{|\OPtd(\mathbb{F}_q)|}{|T(\mathbb{F}_q)|} = \frac{|\OPtdtwist(\mathbb{F}_q)|}{|T'(\mathbb{F}_q)|}
\end{align*}
Thus it is enough to show that $|\OPtdtwist(\F_q)| = |\Gr(k,n)(\mathbb{F}_q)|$. 

Let $\Gr'(k,n)$ be the $\F_q$-rational form of $\Gr(k,n)_{F_{q^n}}$ corresponding to the $\Fr$-twisted conjugacy class of $\rho$ in $H^1(\Gal(\F_{q^n}/\F_q),\Aut(\Gr(k,n)_{F_{q^n}}))$ (see subsection~\ref{subsec:rat_forms} for more details). Then by the last statement of Proposition~\ref{prop:forms-H1}, $\OPtdtwist$ descends to an open subset of $\Gr'(k,n)$. Since $\rho = g^{-1}\Fr(g)$, the $\Fr$-twisted conjugacy class of $\rho$ includes the identity and hence is trivial in $H^1(\Gal(\F_{q^n}/\F_q),
Aut(\Gr(k,n)_{F_{q^n}}))$. Thus by Proposition~\ref{prop:forms-H1}, $\Gr'(k,n) \cong \Gr(k,n)$ over $\F_q$. Explicitly in the ordered basis consisting of columns of $g$, the Frobenius action on $\Gr'(k,n)(\F_{q^n})$ is
\[
(v_1,v_2,\dots,v_n)\longmapsto (v_n^q,v_1^q,\dots,v_{n-1}^q),
\qquad v_j\in \F_{q^n}^k.
\]
To make the fixed-point condition explicit, represent a point $V \in \Gr'(k,n)$ as the row span of a full-rank
$k\times n$ matrix
\[
M_V=(v_1,\dots,v_n),\qquad v_j\in \overline{\F}_q^{\,k},
\]
modulo left multiplication by $\GL_k(\overline{\F}_q)$.  The Frobenius on $\Gr'(k,n)$ sends
$V$ to
\[
    \mathrm{Fr}_q'(V):=[\mathrm{Fr}_q'(M_V)], \qquad \mathrm{Fr}_q'(M_V):=(v_n^q,v_1^q,\dots,v_{n-1}^q).
\]
Then $V\in \Gr'(k,n)(\F_q)$ if and only if there exists $a \in \GL_k(\overline{\F}_q)$ such that
\begin{align}\label{eq:frob_twist_glk}
M_V=a\,\mathrm{Fr}_q'(M_V).
\end{align}
By Theorem~\ref{thm:LangS} (Lang's theorem), I may choose $h\in \GL_k(\overline{\F}_q)$ with
\[
h^{-1}\Fr_q(h)=a.
\]
Multiplying the relation \eqref{eq:frob_twist_glk} by \(h\) on the left and using \(\Fr_q(h)=ha\), I obtain
\[
hM_V=\mathrm{Fr}_q'(hM_V).
\]
Equivalently, writing \(hM_V=(v_1',\dots,v_n')\), I have
\[
(v_1',\dots,v_n')=( (v_n')^q,(v_1')^q,\dots,(v_{n-1}')^q).
\]
Thus every point $V \in \Gr'(k,n)(\F_q)$ is the row-span of a matrix of the form $(v_1, v_1^q,v_1^{q^2}, \dots, v_1^{q^{n-1}})$ with rank $k$ and $v_1 \in \F_{q^n}^k$. Let $v_1 =(v_{1,1},v_{2,1},v_{3,1}, \dots, v_{k,1})^\top$. \\
I claim that $v_{1,1},v_{2,1},v_{3,1}, \dots, v_{k,1}$ are $\F_q$-linearly independent. To prove this claim, suppose that I have a relation
\[
 \sum_{i=1}^k c_iv_{i,1} = 0, \qquad c_i \in \F_{q}.
\]
Then by applying Frobenius,
\[
    \sum_{i=1}^k c_iv_{i,1}^{q^j} = 0, \qquad \textnormal{ for any } j,
\]
so $(c_1,c_2, \dots, c_k)^\top \in \ker(v_1, v_1^q,v_1^{q^2}, \dots, v_1^{q^{n-1}})^\top$. Since $(v_1, v_1^q,v_1^{q^2}, \dots, v_1^{q^{n-1}})$ has rank $k$, this implies $c_1=c_2 = \dots  = c_k = 0$ proving the claim that $v_{1,1},v_{2,1},v_{3,1}, \dots, v_{k,1}$ are $\F_q$-linearly independent.\\

Then by applying the Frobenius automorphism, $v_{1,1}^{q^j},v_{2,1}^{q^j},\dots, v_{k,1}^{q^j}$ are $\F_q$-linearly independent for any $j$. Finally Lemma~\ref{lem:moore_det_consequence} implies all the cyclic $k$-by-$k$ minors are non-zero. Thus the inclusion $\OPtdtwist(\F_q) \to \Gr'(k,n)(\F_q)$ is an equality of sets.
\end{proof}

\begin{remark}\label{rem:bij_hope}
A natural question is whether Theorem~\ref{thm:main_thm_1.2} can be made bijective. A lot of the approaches that work for $k=1$ use steps that imply $\Pi_{1,n}^{\circ'}/T' \cong \Pi_{1,n}^\circ/T$. 
However, this fails already for $k=2, n=5$. To see that $\Pi_{2,5}^{\circ'}/T' \not\cong \Pi_{2,5}^\circ/T$, note that $\Pi_{2,5}^\circ/T$ is the $C(1,1)$-cluster algebra whose automorphisms are $D_{10}$ (see \cite{BD15}), corresponding to rotation and horizontal reflection of $2$-by-$5$-matrices. 
These are all defined over $\F_q$ hence 
\[
\Aut_{\F_q}(\Pi_{2,5}^\circ/T) = D_{10}.
\] 
However only the automorphisms which commute with rotation descend to automorphisms of $\Pi_{2,5}^{\circ'}/T'$. So  
\[
\Aut_{\F_q}(\Pi_{2,5}^{\circ'}/T') = \Z/5\Z.
\]
\end{remark}

\begin{remark}
    For other Weyl groups, the rational $q$-Catalan numbers are defined in \cite{GLTW24} $q$-ifying a definition of $W$-Catalan numbers given in \cite{Hai94}.  These give the point counts of open Richardson varieties $R_{e,c^k}$ where $c$ is the Coxeter element in the Weyl group. It would be interesting to see if there is an explanation of nice formula involving rational $q$-Catalan numbers in other types similar to Theorem~\ref{thm:main_thm_1.2}.
\end{remark}

\bibliographystyle{alpha}
\bibliography{references}

\end{document}